\theoremstyle{plain}
\newtheorem{theorem}{Theorem}[section]
\newtheorem{corollary}[theorem]{Corollary}
\newtheorem{example}[theorem]{Example}
\newtheorem{lemma}[theorem]{Lemma}
\numberwithin{equation}  {section}
\begin{document}
	\title[New Chen-Beurling Theorem]{An Extension of the
		Chen-Beurling-Helson-Lowdenslager Theorem}
	\author{Haihui Fan}
	\curraddr{University of New Hampshire}
	\email{hun4@wildcats.unh.edu}
	\author{Don Hadwin}
	\curraddr{University of New Hampshire}
	\email{don@unh.edu}
	\author{Wenjing Liu}
	\address{University of New Hampshire}
	\email{wbs4@wildcats.unh.edu}
	\thanks{}
	\keywords{gauge norm,Hardy space, Beurling Theorem}
	\dedicatory{Dedicated to Peter Rosenthal, a great mathematician, a great
		lawyer, and a great human being\\
	}

\begin{abstract}
Yanni Chen \cite{chen2} extended the classical Beurling-Helson-Lowdenslager
Theorem for Hardy spaces on the unit circle $\mathbb{T}$ defined in terms of
continuous gauge norms on $L^{\infty}$ that dominate $\Vert\cdot\Vert_{1}$. We
extend Chen's result to a much larger class of continuous gauge norms. A key
ingredient is our result that if $\alpha$ is a continuous normalized gauge
norm on $L^{\infty}$, then there is a probability measure $\lambda$, mutually
absolutely continuous with respect to Lebesgue measure on $\mathbb{T}$, such
that $\alpha\geq c\Vert\cdot\Vert_{1,\lambda}$ for some $0<c\leq1.$

\end{abstract}

\maketitle
\section{Introduction}

Let $\mathbb{T}$ be the unit circle, i.e., $\mathbb{T}=\{ \lambda\in
\mathbb{C}:\left\vert \lambda\right\vert =1\}$, and let $\mu$ be Haar measure
(i.e., normalized arc length) on $\mathbb{T}$. The classical and influential
Beurling-Helson-Lowdenslager theorem (see \cite{B},\cite{H.L}) states that if $W$ is a closed
$H^{\infty}(\mathbb{T},\mu)$-invariant subspace (or, equivalently,
$zW\subseteq W)$ of $L^{2}\left(  \mathbb{T},\mu\right)  \text{,}$ then
$W=\varphi H^{2}$ for some $\varphi\in L^{\infty}(\mathbb{T},\mu)\text{,}$
with $\left\vert \varphi\right\vert =1$ a.e.$(\mu)$ or $W=\chi_{E}%
L^{2}(\mathbb{T},\mu)$ for some Borel set $E\subset\mathbb{T}$. If $0\neq
W\subset H^{2}(\mathbb{T},\mu)$, then $W=\varphi H^{2}(\mathbb{T},\mu)\;$for
some $\varphi\in H^{\infty}(\mathbb{T},\mu)$ with $\left\vert \varphi
\right\vert =1$ a.e. $(\mu)$. Later, the Beurling's theorem was extended to
$L^{p}(\mathbb{T},\mu)$ and $H^{p}(\mathbb{T},\mu)$ with $1\leq p\leq\infty$,
with the assumption that $W$ is weak*-closed when $p=\infty$ (see
\cite{halmos},\cite{helson},\cite{H.L},\cite{hoffman}). In \cite{chen2}, Yanni
Chen extended the Helson-Lowdenslager-Beurling theorem for all continuous
$\Vert\cdot\Vert_{1,\mu}$-dominating normalized gauge norms on $\mathbb{T}$.

In this paper we extend the Helson-Lowdenslager-Beurling theorem for a much
larger class of norms. We first extend Chen's results to the case of
$c\Vert\cdot\Vert_{1,\mu}$-dominating continuous gauge norms. We then prove
that for any continuous gauge norm $\alpha$, there is a probability measure
$\lambda$ that is mutually absolutely continuous with respect to $\mu$ such
that $\alpha$ is $c\Vert\cdot\Vert_{1,\lambda}$-dominating. We use this result
to extend Chen's theorem. Our extension depends on Radon-Nikodym derivative
$d\lambda/d\mu$. In particular, Chen's theorem extends exactly whenever
$\log\left(  d\lambda/d\mu\right)  \in L^{1}(\mathbb{T},\mu)$.

\section{Continuous Gauge Norms on $\Omega$}

Suppose $\left(  \Omega,\Sigma,\nu\right)  $ is a probability space. A norm
$\alpha$ on $L^{\infty} (\Omega,\nu)$ is a \emph{normalized gauge norm} if

\begin{enumerate}
\item $\alpha(1) =1\text{,}$

\item $\alpha(\vert f\vert) =\alpha(f)$ for every $f \in L^{\infty}
(\Omega,\nu)$.
\end{enumerate}

In addition we say $\alpha$ is \emph{continuous} ($\nu$\emph{-continuous}) if%

\[
\lim_{\nu(E) \rightarrow0}\alpha(\chi_{E}) =0 ,\; \text{}\;
\]
that is, whenever $\{E_{n}\}$ is a sequence in $\Sigma$ and $\nu\left(
E_{n}\right)  \rightarrow0$, we have $\alpha\left(  \chi_{E_{n}}\right)
\rightarrow0$.

We say that a\textbf{\ }\emph{normalized gauge norm} $\alpha$ is $c \Vert
\cdot\Vert_{1 ,\nu}$\emph{-dominating} for some $c >0$ if%

\[
\alpha(f) \geq c \Vert f\Vert_{1 ,\nu} ,\; \text{for every}\;f \in L^{\infty}
(\Omega,\nu)\text{.}%
\]

It is easily to see the following fact that\newline(1) The common norm
$\Vert\cdot\Vert_{p,\nu}$ is a $\alpha$ norm for $1\leq p\leq\infty$.
\newline(2) If $\nu$ and $\lambda$ are mutually absolutely continuous
probability measures, then $L^{\infty}(\Omega,\nu)=L^{\infty}(\Omega,\lambda)$
and a normalized gauge norm is $\nu$-continuous if and only if it is $\lambda$-continuous.

We can extend the normalized gauge norm $\alpha$ from $L^{\infty}(\Omega,\nu)$
to the set of all measurable functions, and define $\alpha$ for all measurable
functions $f$ on $\Omega$ by%
\[
\alpha(f)=\sup\{ \alpha(s):s\; \text{is a simple function}\;,0\leq s\leq|f|\}
\text{.}%
\]
It is clear that $\alpha(f)=\alpha(|f|)$ still holds. \newline Define
\[
\mathcal{L}^{\alpha}(\Omega,\nu)=\{f:f\; \text{is a measurable function on}\;
\Omega\; \; \text{with}\; \alpha(f)<\infty\} \text{,}%
\]%
\[
L^{\alpha}(\Omega,\nu)=\overline{L^{\infty}(\nu)}^{\alpha},\; \text{i.e.,
the}\; \alpha\; \text{-closure of}\;L^{\infty}(\nu)\; \text{in}\;
\mathcal{L}^{\alpha}\text{.}%
\]

Since $L^{\infty} \left(  \Omega,\nu\right)  $ with the norm $\alpha$ is dense
in $L^{\alpha} (\Omega,\nu)$, they have the same dual spaces. We prove in the
next lemma that the normed dual $\left(  L^{\alpha} (\Omega,\nu)
,\alpha\right)  ^{\#} =\left(  L^{\infty} \left(  \Omega,\nu\right)
,\alpha\right)  ^{\#}$ can be viewed as a vector subspace of $L^{1}
(\Omega,\nu)$. Suppose $w \in L^{1} (\Omega,\nu)$, we define the functional
$\varphi_{w} :L^{\infty} (\Omega,\nu) \rightarrow\mathbb{C}$ by%
\[
\varphi_{w} \left(  f\right)  =\int_{\Omega}f w d\nu\text{.}%
\]

\begin{lemma}
\label{dual}Suppose $\left(  \Omega,\Sigma,\nu\right)  $ is a probability
space and $\alpha$ is a continuous normalized gauge norm on $L^{\infty}%
(\Omega,\nu)$. Then \newline(1) if $\varphi:L^{\infty}(\Omega,\nu
)\rightarrow\mathbb{C}$ is an $\alpha$-continuous linear functional, then
there is a $w\in L^{1}(\Omega,\nu)$ such that $\varphi=\varphi_{w}$,
\newline(2) if $\varphi_{w}$ is $\alpha$-continuous on $L^{\infty}(\Omega
,\nu)$, then \newline(a) $\left\Vert w\right\Vert _{1,\mu}\leq\left\Vert
\varphi_{w}\right\Vert =\left\Vert \varphi_{\left\vert w\right\vert
}\right\Vert \text{,}$ \newline(b) given $\varphi\;$in the dual of $L^{\alpha
}(\Omega,\lambda)$, i.e., $\varphi\in\left(  L^{\alpha}(\Omega,\lambda
)\right)  ^{\#}\text{,}$ there exists a $w\in L^{1}(\Omega,\lambda)$, such
that \newline%
\[
\forall f\in L^{\infty}(\Omega,\lambda),\varphi(f)=\int_{\Omega}%
fwd\lambda\; \text{and}\;wL^{\alpha}(\Omega,\lambda)\subseteq L^{1}%
(\Omega,\lambda)\text{.}%
\]

\end{lemma}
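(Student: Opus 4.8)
The plan is to establish the three assertions in order, with part (1) serving as the representation theorem on which (2a) and (2b) rest. First I record a preliminary monotonicity estimate that everything will use: from the gauge axioms alone, if $0\le h\le1$ then writing $h=\cos\theta=\tfrac12(e^{i\theta}+e^{-i\theta})$ with $\theta=\arccos h$ measurable gives $\alpha(hf)\le\tfrac12\alpha(e^{i\theta}f)+\tfrac12\alpha(e^{-i\theta}f)=\alpha(|f|)=\alpha(f)$; hence $|f|\le|g|$ implies $\alpha(f)\le\alpha(g)$, and in particular $\alpha\le\Vert\cdot\Vert_\infty$ and $\alpha(\chi_E)\le1$ for every $E$. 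For part (1), given an $\alpha$-continuous $\varphi$ with $|\varphi(f)|\le C\alpha(f)$, I would define the complex set function $\eta(E)=\varphi(\chi_E)$ on $\Sigma$. Finite additivity is immediate from linearity and $\chi_{E\cup F}=\chi_E+\chi_F$ for disjoint $E,F$. The continuity hypothesis on $\alpha$ is exactly what promotes this to a countably additive measure: if $E=\bigsqcup_k E_k$ and $F_n=\bigcup_{k\le n}E_k$, then $\nu(E\setminus F_n)\to0$, so $\alpha(\chi_E-\chi_{F_n})=\alpha(\chi_{E\setminus F_n})\to0$ and thus $\varphi(\chi_{F_n})\to\varphi(\chi_E)$. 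Since $\nu(E)=0$ forces $\chi_E=0$ in $L^\infty(\nu)$ and hence $\eta(E)=0$, we get $\eta\ll\nu$, and the Radon--Nikodym theorem (a countably additive complex measure has finite total variation) produces $w\in L^1(\Omega,\nu)$ with $\varphi(\chi_E)=\int\chi_E\,w\,d\nu$. As $\varphi$ and $\varphi_w$ are both $\alpha$-continuous and agree on simple functions, which are $\Vert\cdot\Vert_\infty$-dense and therefore $\alpha$-dense in $L^\infty$, we conclude $\varphi=\varphi_w$.

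For part (2a), let $u$ be the unimodular phase of $w$, i.e. $w=|w|u$ with $|u|=1$ on $\{w\ne0\}$ and $u=0$ elsewhere, so that $\bar u w=|w|$. Testing against $\bar u$, which satisfies $\alpha(\bar u)=\alpha(|\bar u|)\le\alpha(1)=1$, gives $\Vert w\Vert_{1,\nu}=\int|w|\,d\nu=\varphi_w(\bar u)\le\Vert\varphi_w\Vert$. For the equality $\Vert\varphi_w\Vert=\Vert\varphi_{|w|}\Vert$ I would use two symmetric estimates: for any $f$ with $\alpha(f)\le1$ one has $|\int fw\,d\nu|\le\int|f|\,|w|\,d\nu\le\Vert\varphi_{|w|}\Vert$ since $\alpha(|f|)=\alpha(f)$, which yields $\Vert\varphi_w\Vert\le\Vert\varphi_{|w|}\Vert$; conversely $\int f|w|\,d\nu=\varphi_w(f\bar u)$ with $\alpha(f\bar u)=\alpha(|f|\,|\bar u|)\le\alpha(f)\le1$, which yields $\Vert\varphi_{|w|}\Vert\le\Vert\varphi_w\Vert$.

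For part (2b), restricting $\varphi$ to $L^\infty(\Omega,\lambda)$ and applying part (1) produces $w\in L^1(\Omega,\lambda)$ with $\varphi(f)=\int fw\,d\lambda$ on $L^\infty$; the real content is the inclusion $wL^\alpha(\Omega,\lambda)\subseteq L^1(\Omega,\lambda)$. Given $g\in L^\alpha$ it suffices to treat $g\ge0$, since $|g|\in L^\alpha$ and $|gw|=|g|\,|w|$. The key device is to set $h_n=\bar u\,\min(g,n)\in L^\infty$, so that $\varphi(h_n)=\int\min(g,n)\,\bar u w\,d\lambda=\int\min(g,n)\,|w|\,d\lambda$ is a nonnegative increasing sequence. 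On one hand $\alpha(h_n)=\alpha(\min(g,n)\,|\bar u|)\le\alpha(g)$ by monotonicity, so $\varphi(h_n)\le\Vert\varphi\Vert\,\alpha(g)$ uniformly in $n$; on the other hand monotone convergence gives $\varphi(h_n)\uparrow\int g\,|w|\,d\lambda$. Hence $\int|g|\,|w|\,d\lambda\le\Vert\varphi\Vert\,\alpha(g)<\infty$, i.e. $wg\in L^1(\Omega,\lambda)$.

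I expect the main obstacle to be part (2b). The representation $\varphi(f)=\int fw\,d\lambda$ is known a priori only on $L^\infty$, so one cannot directly substitute a general $g\in L^\alpha$ into the integral; the phase-truncation sequence $h_n=\bar u\min(g,n)$ is precisely what bridges this gap, converting the continuity bound $|\varphi(h_n)|\le\Vert\varphi\Vert\,\alpha(g)$ into an a priori $L^1$-bound on $g|w|$ through monotone convergence. A secondary point demanding care is the preliminary monotonicity (equivalently $\alpha\le\Vert\cdot\Vert_\infty$), on which the countable additivity in (1) and the norm comparisons in (2) both depend.
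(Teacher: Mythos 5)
Your proposal is correct, and its skeleton matches the paper's: both define the set function $E\mapsto\varphi(\chi_E)$, use the continuity of $\alpha$ (via $\nu(\cup_{k>N}E_k)\to 0$) to get countable additivity, apply Radon--Nikodym, and identify $\varphi=\varphi_w$ through $\Vert\cdot\Vert_\infty$-density of simple functions; in (2b) both use the phase of $w$ together with monotone convergence. The differences are worth noting, and they mostly favor your write-up. First, you prove the monotonicity $|f|\le|g|\Rightarrow\alpha(f)\le\alpha(g)$ via the $\cos\theta$ convexity trick, whereas the paper uses it silently (e.g.\ in the step $\alpha(f)\le\Vert f\Vert_\infty$ and in its norm comparisons); this is a real prerequisite and your making it explicit is an improvement. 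Second, for (2a) the paper only says ``from (1) we will see,'' while you actually prove both $\Vert w\Vert_{1,\nu}\le\Vert\varphi_w\Vert$ (testing against $\bar u$) and the equality $\Vert\varphi_w\Vert=\Vert\varphi_{|w|}\Vert$ (two symmetric estimates using $\alpha(f\bar u)\le\alpha(f)$). Third, and most substantively, in (2b) the paper takes an increasing sequence of simple functions $s_n\uparrow|f|$ and asserts without proof that $\alpha(\bar v s_n-\bar v|f|)\to 0$ — a nontrivial density claim about $L^\alpha$ — in order to pass $\varphi$ through the limit; your truncation $h_n=\bar u\min(g,n)$ needs only the one-sided bound $\alpha(h_n)\le\alpha(g)$ plus monotone convergence, so you get $\int g|w|\,d\lambda\le\Vert\varphi\Vert\,\alpha(g)$ without ever invoking $\alpha$-norm convergence of the approximants; this sidesteps the paper's gap entirely (and also avoids its unnecessary detour through the functions $g,h$ of Theorem 2.2). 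One cosmetic slip: at the end of your part (1) you say $\varphi$ and $\varphi_w$ are ``both $\alpha$-continuous,'' but the $\alpha$-continuity of $\varphi_w$ is not known at that point; what your argument actually uses — and all it needs — is that both functionals are $\Vert\cdot\Vert_\infty$-continuous ($\varphi$ because $\alpha\le\Vert\cdot\Vert_\infty$, $\varphi_w$ because $w\in L^1$), after which $\varphi=\varphi_w$ follows and the $\alpha$-continuity of $\varphi_w$ is automatic.
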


\begin{proof}
(1) If $\alpha$ is continuous, it follows that, whenever $\left\{
E_{n}\right\}  $ is a disjoint sequence of measurable sets,%
\[
\lim_{N\rightarrow\infty}\alpha\left(  \chi_{\cup_{n=1}^{\infty}E_{n}}%
-\sum_{k=1}^{N}\chi_{E_{k}}\right)  =\lim_{N\rightarrow\infty}\alpha\left(
\chi_{\cup_{k=N+1}^{\infty}E_{k}}\right)  =0\text{,}%
\]
since $\lim_{N\rightarrow\infty}\nu\left(  \cup_{k=N+1}^{\infty}E_{k}\right)
=0.$ It follows that%
\[
\rho\left(  E\right)  =\varphi\left(  \chi_{E}\right)
\]
defines a measure $\rho$ and $\rho<<\nu$. It follows that if $w=d\rho
/d\nu\text{,}$ then%
\[
\left\Vert w\right\Vert _{1,\nu}=\sup\left\{  \left\vert \int_{\Omega}%
wsd\nu\right\vert :s\; \text{is simple,}\; \left\Vert s\right\Vert _{\infty
}\leq1\right\}
\]%
\[
=\sup\left\{  \left\vert \varphi\left(  s\right)  \right\vert :s\;
\text{simple,}\; \left\Vert s\right\Vert _{\infty}\leq1\right\}
\leq\left\Vert \varphi\right\Vert \text{.}%
\]
Hence $w\in L^{1}(\Omega,\nu)$. Also, since, for every $f\in L^{\infty}%
(\Omega,\nu)$%
\[
\left\vert \varphi\left(  f\right)  \right\vert \leq\left\Vert \varphi
\right\Vert \alpha\left(  f\right)  \leq\left\Vert \varphi\right\Vert
\left\Vert f\right\Vert _{\infty}\text{,}%
\]
we see that $\varphi$ is $\Vert\cdot\Vert_{\infty}$-continuous on $L^{\infty
}(\Omega,\nu)\text{,}$ so it follows that $\varphi=\varphi_{w}$.

(2a) From (1) we will see $\left\Vert w\right\Vert _{1,\nu}\leq\left\Vert
\varphi\right\Vert .$

(2b) For any measurable set $E\subseteq\Omega$, and for all $\varphi\in\left(
L^{\alpha}(\lambda)\right)  ^{\#}$, define $\rho(E)=\varphi(\chi_{E})\text{.}$
we can prove $\rho$ is a measure as in theorem \ref{thm(1)}, and $\rho
\ll\lambda$. By Radon-Nikodym theorem, there exists a function $w\in
L^{1}(\lambda)$ such that, for every measurable set $E\subseteq\Omega$,
$\varphi(\chi_{E})=\rho(E)=\int_{\Omega}\chi_{E}wd\lambda$. Thus $\forall f\in
L^{\infty}(\Omega,\lambda)$, $\varphi(f)=\int_{\Omega}fwd\lambda=\int_{\Omega
}fwgd\mu=\int_{\Omega}fw|h|d\mu=\int_{\Omega}fwuhd\mu=\int_{\Omega
}f\widetilde{w}hd\mu$, where $\widetilde{w}=wu,|\widetilde{w}|=|w|\text{,}$
here $\widetilde{w}\in L^{1}(\Omega,\lambda)$ and $g,h$ as in theorem
\ref{thm(1)}, so $\widetilde{w}h\in L^{1}(\mu)$. Therefore, $\varphi
(f)=\int_{\Omega}f\widetilde{w}hd\mu$ for all $f\in L^{\alpha}(\Omega
,\lambda)$.

Suppose $f\in L^{\alpha}(\Omega,\lambda)$, $f=u|f|,|u|=1$. $|f|\in L^{\alpha
}(\Omega,\lambda)$. There exists an increasing positive sequence $s_{n}$ such
that $s_{n}\rightarrow|f|$ a.e.$(\mu)$, thus $us_{n}\rightarrow u|f|$
a.e.$(\mu)$. $\forall w\in L^{1}(\Omega,\lambda),w=v|w|$, where $|v|=1$, so we
have $\overline{v}s_{n}\rightarrow\overline{v}|f|$ a.e.$(\mu)$, where
$\overline{v}$ is the conjugate of $v$ and $\alpha(\overline{v}s_{n}%
-\overline{v}|f|)\rightarrow0$. Thus $\varphi(\overline{v}s_{n})\rightarrow
\varphi(\overline{v}|f|)$. On the other hand, we also have $\varphi
(\overline{v}s_{n})=\int_{\Omega}\overline{v}s_{n}wd\lambda\rightarrow
\int_{\Omega}\overline{v}|f|wd\lambda=\int_{\Omega}|f||w|d\lambda$ by monotone
convergence theorem. Thus $\int_{\Omega}|f||w|d\lambda=\int_{\Omega
}|f|\overline{v}wd\lambda=\varphi(\overline{v}|f|)<\infty$, therefore $fw\in
L^{1}(\Omega,\lambda)$, i.e., $wL^{\alpha}(\Omega,\lambda)\subseteq
L^{1}(\Omega,\lambda)$, where $w\in L^{1}(\Omega,\lambda)$.
\end{proof}

\begin{theorem}
\label{thm(1)}Suppose $\left(  \Omega,\Sigma,\nu\right)  $ is a probability
space, $\alpha$ is a continuous normalized gauge norm on $L^{\infty}%
(\Omega,\nu)$ and $\varepsilon>0$. Then there exists a constant $c$ with
$1-\varepsilon<c\leq1$ and a probability measure $\lambda$ on $\Sigma$ that is
mutually absolutely continuous with respect to $\nu$ such that $\alpha$ is
$c\Vert\cdot\Vert_{1,\lambda}$-dominating.
\end{theorem}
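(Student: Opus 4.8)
The plan is to realize $d\lambda/d\nu$ as (a strictly positive perturbation of) the Radon--Nikodym derivative of an $\alpha$-contractive state on $L^{\infty}(\Omega,\nu)$, produced by a minimax argument, and then to use the \emph{continuity} of $\alpha$ to force that state to be countably additive and $\nu$-absolutely continuous. First I would record three consequences of the gauge axioms. Writing any $a$ with $\Vert a\Vert_{\infty}\le1$ as $a=\tfrac12(u_{1}+u_{2})$ with $|u_{1}|=|u_{2}|=1$ and using $\alpha(uh)=\alpha(|h|)=\alpha(h)$ for unimodular $u$ gives the module bound $\alpha(af)\le\Vert a\Vert_{\infty}\alpha(f)$; in particular $\alpha$ is monotone on nonnegative functions and $\alpha(f)\le\Vert f\Vert_{\infty}$. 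Consequently, if $f\ge0$ and $\alpha(f)\le1$ then $f\ge(\operatorname{ess\,inf}f)\mathbf{1}$, so $\operatorname{ess\,inf}f\le\alpha(f)\le1$. It then suffices to find, for each $\beta$ slightly larger than $1$, a probability density $g>0$ a.e.\ with $\int fg\,d\nu\le\beta\alpha(f)$ for all $f\ge0$: then $\lambda=g\,d\nu$ is mutually absolutely continuous with $\nu$ and $\alpha(f)=\alpha(|f|)\ge c\Vert f\Vert_{1,\lambda}$ with $c=1/\beta$.

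The core step constructs such a $g$ with $\beta=1$. Let $K=\{f\in L^{\infty}:f\ge0,\ \alpha(f)\le1\}$ and let $S$ be the state space of $L^{\infty}(\Omega,\nu)$, which is convex and weak-$\ast$ compact. Applying Sion's minimax theorem to $(\omega,f)\mapsto\omega(f)$ on $S\times K$ --- legitimate since only $S$ need be compact and the pairing is weak-$\ast$ continuous and affine in $\omega$ and affine in $f$ --- yields
\[
\min_{\omega\in S}\ \sup_{f\in K}\omega(f)=\sup_{f\in K}\ \min_{\omega\in S}\omega(f)=\sup_{f\in K}\operatorname{ess\,inf}f=1,
\]
where $\min_{\omega\in S}\omega(f)=\operatorname{ess\,inf}f$ is the bottom of the essential range of $f$, and the last equality uses the previous paragraph, with $f=\mathbf{1}$ attaining the value $1$. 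Hence there is a state $\omega_{0}$ with $\omega_{0}(f)\le1$ on $K$, i.e.\ $\omega_{0}(f)\le\alpha(f)$ for every $f\ge0$.

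The main obstacle is that $\omega_{0}$ is a priori only a finitely additive state and could have a singular part, and this is exactly where continuity of $\alpha$ enters. Set $\rho_{0}(E)=\omega_{0}(\chi_{E})$, so $\rho_{0}$ is finitely additive and $\rho_{0}(E)\le\alpha(\chi_{E})$. If $E_{n}\downarrow\varnothing$ then $\nu(E_{n})\to0$, so $\alpha(\chi_{E_{n}})\to0$ and thus $\rho_{0}(E_{n})\to0$; this is continuity from above at $\varnothing$, so $\rho_{0}$ is countably additive, and $\nu(E)=0\Rightarrow\chi_{E}=0\Rightarrow\rho_{0}(E)=0$ gives $\rho_{0}\ll\nu$. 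Exactly as in Lemma~\ref{dual}, Radon--Nikodym produces $g_{0}\ge0$ in $L^{1}(\nu)$ with $\rho_{0}=g_{0}\,d\nu$ and $\int g_{0}\,d\nu=\omega_{0}(\mathbf{1})=1$; since $\omega_{0}$ and $\varphi_{g_{0}}$ agree on simple functions and are both $\Vert\cdot\Vert_{\infty}$-continuous, they agree on $L^{\infty}$, whence $\int fg_{0}\,d\nu\le\alpha(f)$ for all $f\ge0$.

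It remains to upgrade $g_{0}\ge0$ to a density that is strictly positive a.e., as mutual absolute continuity requires; this is the secondary subtlety, and it is where I spend the factor $c<1$. Call $w\ge0$ \emph{admissible} if $\int fw\,d\nu\le\alpha(f)$ for all $f\ge0$ and $\int w\,d\nu\le1$; admissibility is preserved under convex and countable convex combinations $\sum2^{-n}w_{n}$. Taking admissible $w_{n}$ with $\nu(\{w_{n}>0\})$ increasing to $s:=\sup\{\nu(\{w>0\}):w\text{ admissible}\}$ and forming $w=\sum2^{-n}w_{n}$ gives an admissible $w$ with $\nu(\{w>0\})=s$. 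If $s<1$, I would apply the minimax/continuity construction above to the continuous normalized gauge norm $f\mapsto\alpha(f\chi_{M})/\alpha(\chi_{M})$ on $M=\{w=0\}$, obtaining a nonzero admissible density supported on $M$ whose average with $w$ contradicts maximality of $s$; hence $s=1$ and $w>0$ a.e. Finally I set $g=(1-\delta)g_{0}+\delta\,w/\!\int w\,d\nu$, a probability density that is $>0$ a.e.\ and satisfies $\int fg\,d\nu\le\bigl(1+\delta(1/\!\int w\,d\nu-1)\bigr)\alpha(f)$ for $f\ge0$. Choosing $\delta>0$ small forces the constant $\beta<1/(1-\varepsilon)$, so $c=1/\beta\in(1-\varepsilon,1]$ and $\lambda=g\,d\nu$ is the required measure.
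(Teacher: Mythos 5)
Your proposal is correct, and its three-stage skeleton actually matches the paper's proof: (i) use the continuity of $\alpha$ to turn an $\alpha$-dominated positive functional into an $L^{1}(\nu)$ density (you rederive the countable-additivity mechanism of Lemma \ref{dual}; in fact your $\omega_{0}$ satisfies $|\omega_{0}(f)|\leq\omega_{0}(|f|)\leq\alpha(|f|)=\alpha(f)$, so it is $\alpha$-continuous and you could have cited Lemma \ref{dual}(1) verbatim); (ii) an exhaustion by countable convex combinations to reach an admissible density of maximal support, with a contradiction argument forcing full support (the paper's $\sup M$ and $h_{0}=\sum_{n}2^{-n}h_{n}/\Vert\varphi_{h_{n}}\Vert$); (iii) a convex mixture to push the constant above $1-\varepsilon$ --- your $g=(1-\delta)g_{0}+\delta w/\!\int w\,d\nu$ is precisely the paper's $h_{t}=(1-t)k+th_{0}$, with your minimax state density $g_{0}$ playing the role of the paper's norming functional $k$ at $\mathbf{1}$ (both have ``constant $1$'' but possibly degenerate support) and your maximal $w$ playing $h_{0}$ (full support, degraded constant). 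Where you genuinely differ is in how the dominated positive functionals are produced: the paper applies Hahn--Banach in $(L^{\infty},\alpha)$ to norm $\chi_{E}$ (respectively $\mathbf{1}$) and then invokes Lemma \ref{dual}(2a), $\Vert\varphi_{|g|}\Vert=\Vert\varphi_{g}\Vert$, to trade $g$ for $|g|$; you instead run Sion's minimax over the weak-$\ast$ compact state space $S$, using $\min_{\omega\in S}\omega(f)=\operatorname{ess\,inf}f$, which delivers positivity and the normalization $\omega_{0}(\mathbf{1})=1$ for free, and in the support-extension step you recurse the whole construction on the normalized restricted gauge norm $\alpha(\cdot\,\chi_{M})/\alpha(\chi_{M})$ rather than norming $\chi_{E}$. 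The paper's route is the more elementary (Hahn--Banach plus the modulus identity); yours buys a cleaner conceptual core (a single state dominated by $\alpha$ on nonnegative functions, with positivity handled by the geometry of $S$) at the cost of a minimax theorem. Two small points to make airtight in a write-up: the minimizing $\omega_{0}$ exists because $\omega\mapsto\sup_{f\in K}\omega(f)$ is lower semicontinuous on the compact $S$ (Sion gives the identity of the two values; attainment needs this extra line), and in the recursion the density obtained on $M$ must be rescaled by $\alpha(\chi_{M})/\nu(M)$ to become admissible on $\Omega$ --- both routine, as are the measurable choice of the two unimodular functions in your module bound and the monotone-convergence interchange in the countable convex combinations.
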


\begin{proof}
Let $M=\{ \nu\left(  h^{-1}\left(  \left(  0,\infty\right)  \right)  \right)
:h\in L^{1}(\Omega,\nu),h\geq0,\varphi_{h}$ is $\alpha$-continuous$\}
\text{.}$ It follows from lemma \ref{dual} that $M\neq\varnothing$. Choose
$\left\{  h_{n}\right\}  $ in $L^{1}(\Omega,\nu)$ such that $h_{n}%
\geq0\text{,}$ $\varphi_{h_{n}}$ is $\alpha$-continuous, and such that%
\[
\nu\left(  h_{n}^{-1}\left(  \left(  0,\infty\right)  \right)  \right)
\rightarrow\sup M\text{.}%
\]
Let
\[
h_{0}=\sum_{n=1}^{\infty}\frac{1}{2^{n}}\frac{1}{\left\Vert \varphi_{h_{n}%
}\right\Vert }h_{n}\text{.}%
\]
Since $\left\Vert h_{n}\right\Vert _{1,\nu}\leq\left\Vert \varphi_{h_{n}%
}\right\Vert \text{,}$ we see that $\left\Vert h_{0}\right\Vert _{1,\nu}%
\leq1.$ Also
\[
\varphi_{h_{0}}=\sum_{n=1}^{\infty}\frac{1}{2^{n}}\frac{1}{\left\Vert
\varphi_{h_{n}}\right\Vert }\varphi_{h_{n}}\text{,}%
\]
so $\varphi_{h_{0}}$ is $\alpha$-bounded and $\left\Vert \varphi_{h_{0}%
}\right\Vert \leq1$. On the other hand $h_{n}^{-1}\left(  \left(
0,\infty\right)  \right)  \subset h_{0}^{-1}\left(  \left(  0,\infty\right)
\right)  $ for $n\geq1\text{,}$ so we have%
\[
\nu\left(  h_{0}^{-1}\left(  \left(  0,\infty\right)  \right)  \right)  =\sup
M\text{.}%
\]
Let $E=\Omega\backslash h_{0}^{-1}\left(  \left(  0,\infty\right)  \right)  $
and assume, via contradiction, that $\nu\left(  E\right)  >0$. Then
$\alpha\left(  \chi_{E}\right)  >0$. Hence, by the Hahn-Banach theorem, there
is a $g\in L^{1}(\Omega,\nu)$ such that $\left\Vert \varphi_{g}\right\Vert =1$
and
\[
\alpha\left(  \chi_{E}\right)  =\varphi_{g}\left(  \chi_{E}\right)
=\int_{\Omega}g\chi_{E}d\nu=\varphi_{g\chi_{E}}\left(  \chi_{E}\right)
\leq\varphi_{\left\vert g\right\vert \chi_{E}}\left(  \chi_{E}\right)
\text{.}%
\]
It follows that $\nu\left(  \left(  \left\vert g\right\vert \chi_{E}\right)
^{-1}\left(  0,\infty\right)  \right)  =\eta>0\text{,}$ and that if
$h_{1}=h_{0}+\left\vert g\right\vert \chi_{E}$, then
\[
\sup M\geq\nu\left(  h_{1}^{-1}\left(  \left(  0,\infty\right)  \right)
\right)  =\nu\left(  h^{-1}\left(  \left(  0,\infty\right)  \right)  \right)
+\eta=\sup M+\eta\text{.}%
\]
This contradiction shows that $\nu\left(  E\right)  =0$, so we can assume that
$h_{0}\left(  \omega\right)  >0$ a.e. $\left(  \nu\right)  $. By replacing
$h_{0}$ with $h_{0}/\int_{\Omega}h_{0}d\nu$, we can assume that $\int_{\Omega
}h_{0}d\nu=1$.

If we define a probability measure $\lambda:\Sigma\rightarrow\left[
0,1\right]  $ by%
\[
\lambda\left(  E\right)  =\int_{E}h_{0}d\nu\text{,}%
\]
then $\lambda$ is a measure, $\lambda<<\nu$ and $\nu<<\lambda$ since $0<h_{0}$
a.e. $\left(  \nu\right)  $. Also, we have for every $f\in L^{\infty}\left(
\Omega,\nu\right)  \text{,}$%
\[
\left\Vert f\right\Vert _{1,\lambda}=\int_{\Omega}\left\vert f\right\vert
d\lambda=\int_{\Omega}\left\vert f\right\vert h_{0}d\nu=\varphi_{h_{0}}\left(
\left\vert f\right\vert \right)  \leq\left\Vert \varphi_{h_{0}}\right\Vert
\alpha\left(  f\right)  \text{.}%
\]
Since $\varphi_{h_{0}}\left(  1\right)  =1$, we know $\left\Vert
\varphi_{h_{0}}\right\Vert \geq1.$ Hence, $0<c_{0}=1/\left\Vert \varphi
_{h_{0}}\right\Vert \leq1\text{,}$ and we see that $\alpha$ is $c_{0}%
\Vert\cdot\Vert_{1,\lambda}$-dominating on $E$. If we apply the Hahn-Banach
theorem as above with $E=\Omega$, we can find a nonnegative function $k\in
L^{1}(\Omega,\nu)$ such that
\[
\left\Vert \varphi_{k}\right\Vert =1=\alpha\left(  1\right)  =\varphi
_{k}\left(  1\right)  =\int_{\Omega}k1d\nu\text{.}%
\]
For $0<t<1$ let $h_{t}=\left(  1-t\right)  k+th_{0}$. Then $\varphi_{h_{t}%
}=\left(  1-t\right)  \varphi_{k}+t\varphi_{h_{0}}$. Thus
\[
\lim_{t\rightarrow0^{+}}\left\Vert \varphi_{h_{t}}\right\Vert =\left\Vert
\varphi_{k}\right\Vert =1.
\]
Choose $t$ so that $\left\Vert \varphi_{h_{t}}\right\Vert <1/\left(
1-\varepsilon\right)  $, so $1-\varepsilon<c=1/\left\Vert \varphi_{h_{t}%
}\right\Vert \leq1$. If we define a probability measure $\lambda_{t}%
:\Sigma\rightarrow\left[  0,1\right]  $ by%
\[
\lambda_{t}\left(  E\right)  =\int_{E}h_{t}d\nu\text{,}%
\]
we see that $\lambda_{t}\;<<\mu\nu$ and since $h_{t}\geq th_{0}>0$, we see
$\nu<<\lambda_{t}$. As above we see, for every $f\in L^{\infty}(\Omega,\mu)$
we have%
\[
c\left\Vert f\right\Vert _{1,\lambda_{t}}\leq\frac{1}{\left\Vert
\varphi_{h_{t}}\right\Vert }\int_{\Omega}\left\vert f\right\vert h_{t}%
d\nu=\frac{1}{\left\Vert \varphi_{h_{t}}\right\Vert }\varphi_{h_{t}}\left(
\left\vert f\right\vert \right)  \leq\alpha\left(  f\right)  \text{.}%
\]
Therefore, $\alpha$ is $c\Vert\cdot\Vert_{1,\lambda_{t}}$-dominating on
$\Omega\text{.}$
\end{proof}

If we take $\Omega=\mathbb{T}\text{,}$ theorem \ref{thm(1)} holds for the
probability space $\left(  \Omega,\nu\right)  =$ $(\mathbb{T},\mu)\text{.}$
The $L^{p}$-version of the Helson-Lowdenslager theorem also holds, in a sense,
on the circle $\mathbb{T}$ when $\mu$ is replaced with a mutually absolutely
continuous probability measure $\lambda\text{.}$ Here the role of
$H^{p}\left(  \mathbb{T},\lambda\right)  $ is replaced with $\left(
1/g^{\frac{1}{p}}\right)  H^{p}\left(  \mathbb{T},\mu\right)  $. This result
is well-known, we include a proof for completeness as the following corollary.

\begin{corollary}
\label{unimodu}Suppose $\lambda$ is a probability measure on $\mathbb{T}$ and
$\mu< <\lambda$ and $\lambda< <\mu$. Let $g =d \lambda/d \mu$ and suppose $1
\leq p <\infty$. Suppose $W$ is a closed subspace of $L^{p} (\mathbb{T}
,\lambda)$, and $z W \subset W$. Then $g^{\frac{1}{p}} W =\chi_{E} L^{1}
(\mathbb{T} ,\mu)$ for some Borel subset $E$ of $\mathbb{T}$ or $g^{\frac
{1}{p}} W =\varphi H^{p} (\mathbb{T} ,\mu)$ for some unimodular function
$\varphi$.
\end{corollary}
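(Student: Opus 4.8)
The plan is to reduce the statement to the classical $L^{p}$ Helson--Lowdenslager--Beurling theorem on $(\mathbb{T},\mu)$ by means of an explicit isometric isomorphism implementing the change of measure. I would define $U:L^{p}(\mathbb{T},\lambda)\to L^{p}(\mathbb{T},\mu)$ by $U(f)=g^{1/p}f$. Since $g=d\lambda/d\mu>0$ a.e.$(\mu)$ and $\int_{\mathbb{T}}g\,d\mu=1$, for every $f$ we have
\[
\left\Vert U(f)\right\Vert _{p,\mu}^{p}=\int_{\mathbb{T}}\left\vert g^{1/p}f\right\vert ^{p}\,d\mu=\int_{\mathbb{T}}\left\vert f\right\vert ^{p}g\,d\mu=\int_{\mathbb{T}}\left\vert f\right\vert ^{p}\,d\lambda=\left\Vert f\right\Vert _{p,\lambda}^{p},
\]
so $U$ is a linear isometry. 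It is onto, since $h\mapsto g^{-1/p}h$ is an inverse: the same computation shows $g^{-1/p}h\in L^{p}(\mathbb{T},\lambda)$ whenever $h\in L^{p}(\mathbb{T},\mu)$. Note that $g^{1/p}$ need not be bounded, but $U$ is nonetheless a well-defined surjective isometry of Banach spaces, and hence carries closed subspaces to closed subspaces.

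Next I would check that $U$ intertwines multiplication by $z$: for every $f$ we have $U(zf)=g^{1/p}(zf)=z\,g^{1/p}f=zU(f)$. Consequently, writing $V=U(W)=g^{1/p}W$, the hypothesis $zW\subseteq W$ gives $zV=U(zW)\subseteq U(W)=V$, and $V$ is a closed subspace of $L^{p}(\mathbb{T},\mu)$ that is invariant under multiplication by $z$.

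Finally I would invoke the classical $L^{p}$-version of the Helson--Lowdenslager--Beurling theorem on $(\mathbb{T},\mu)$, valid for $1\leq p<\infty$: every closed $z$-invariant subspace $V$ of $L^{p}(\mathbb{T},\mu)$ is either doubly invariant, in which case $V=\chi_{E}L^{p}(\mathbb{T},\mu)$ for some Borel set $E\subseteq\mathbb{T}$, or simply invariant, in which case $V=\varphi H^{p}(\mathbb{T},\mu)$ for some unimodular $\varphi$. Substituting $V=g^{1/p}W$ yields precisely the two stated alternatives. The argument is essentially routine; the only points demanding care are that $U$ is genuinely \emph{surjective} (which uses $g>0$ a.e.$(\mu)$, i.e.\ the mutual absolute continuity $\mu\ll\lambda$), so that no structure is lost in transferring between the two $L^{p}$ spaces, and the precise citation of the $\mu$-case dichotomy, which I regard as the one genuine input. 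Everything else is the transport of the invariant-subspace structure across the isometry $U$.
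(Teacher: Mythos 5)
Your proposal is correct and follows essentially the same route as the paper: the same surjective isometry $Uf=g^{1/p}f$ from $L^{p}(\mathbb{T},\lambda)$ onto $L^{p}(\mathbb{T},\mu)$, the same intertwining relation $UM_{z,\lambda}=M_{z,\mu}U$, and the same appeal to the classical $L^{p}$ Beurling--Helson--Lowdenslager theorem on $(\mathbb{T},\mu)$. Your explicit verification of surjectivity via $h\mapsto g^{-1/p}h$ (using $g>0$ a.e.) is a small detail the paper leaves implicit, and in fact your conclusion $\chi_{E}L^{p}(\mathbb{T},\mu)$ corrects what appears to be a typo ($L^{1}$) in the paper's statement.
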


\begin{proof}
Define $U :L^{p} (\mathbb{T} ,\lambda) \longrightarrow L^{p} (\mathbb{T}
,\mu)$ by $U f =f g^{\frac{1}{p}}$, for $f \in L^{p} (\mathbb{T} ,\lambda)$.
Clearly $U$ is a surjective isometry, since%
\[
\Vert U f\Vert_{p ,\mu}^{p} =\int_{\mathbb{T}}\left\vert f g^{\frac{1}{p}%
}\right\vert ^{p} d\mu=\int_{\mathbb{T}}\left\vert f\right\vert ^{p} g
d\mu=\int_{\mathbb{T}}\left\vert f\right\vert ^{p} d\lambda=\Vert f\Vert_{p
,\lambda}\text{.}%
\]
Define%
\[
M_{z ,\mu} :L^{p} (\mathbb{T} ,\mu) \longrightarrow L^{p} (\mathbb{T} ,\mu)\;
\text{by}\;M_{z ,\mu} f =z f\; \text{and}\;M_{z ,\lambda} :L^{p} (\mathbb{T}
,\lambda) \longrightarrow L^{p} (\mathbb{T} ,\lambda)\; \text{by}\;M_{z
,\lambda} f =z f\text{.}%
\]
Then
\[
U M_{z ,\lambda} f =U (z f) =g^{\frac{1}{p}} z f =z g^{\frac{1}{p}} f =M_{z
,\mu} g^{\frac{1}{p}} f =M_{z ,\mu} U f\text{,}%
\]
so $U M_{z ,\lambda} =M_{z ,\mu} U\text{.}$ It follows that $W$ is a closed
$z$-invariant subspace of $L^{p} (\mathbb{T} ,\lambda)$ if and only if
$g^{\frac{1}{p}} W =U (W)$ is a $z$-invariant closed linear subspace of $L^{p}
(\mathbb{T} ,\mu)$. The conclusion now follows from the classical Beurling
theorem for $L^{p} \left(  \mathbb{T} ,\mu\right)  $.
\end{proof}

\section{Continuous Gauge Norms on the Unit Circle}

Suppose $\alpha$ is a continuous normalized gauge norm on $L^{\infty}\left(
\mathbb{T},\mu\right)  $, suppose that $c>0$ and $\lambda$ is a probability
measure on $\mathbb{T}$ such that $\lambda<<\mu$ and $\mu<<\lambda$ and such
that $\alpha$ is $c\Vert\cdot\Vert_{1,\lambda}$-dominating. We let
$g=d\lambda/d\mu$ and $g>0$. We consider two cases \newline(1) $\int\left\vert
\log g\right\vert d\mu<\infty\text{,}$ \newline(2) $\int\left\vert \log
g\right\vert d\mu=\infty\text{.}$

We define $L^{p}\left(  \mathbb{T},\lambda\right)  $ to be the $\Vert
\cdot\Vert_{p,\lambda}$-closure of $L^{\infty}\left(  \mathbb{T}%
,\lambda\right)  $ and define $H^{p}(\mathbb{T},\lambda)$ to be $\Vert
\cdot\Vert_{p,\lambda}$-closure of the polynomials for $1\leq p<\infty$.
Denote $L^{\infty}(\mathbb{T},\mu)=L^{\infty}(\mu)\text{,}$ $L^{p}%
(\mathbb{T},\mu)=L^{p}(\mu)$ and $H^{p}(\mathbb{T},\mu)=H^{p}(\mu)\text{.}$

\begin{lemma}
\label{cases}The following are true:\newline(1) $\int\left\vert \log
g\right\vert d\mu<\infty$ $\Leftrightarrow$ there is an outer function $h\in
H^{1}\left(  \mu\right)  $ with $\left\vert h\right\vert =g$, \newline(2)
$\int\left\vert \log g\right\vert d\mu=\infty$ $\Leftrightarrow$ $H^{1}\left(
\lambda\right)  =L^{1}\left(  \lambda\right)  $.
\end{lemma}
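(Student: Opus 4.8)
The plan is to prove both equivalences by connecting the integrability of $\log g$ to the classical theory of outer functions on $\mathbb{T}$, where $g = d\lambda/d\mu > 0$ is a positive integrable function with $\int g\, d\mu = 1$.

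\smallskip

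For part (1), the key classical fact is that a positive function $g \in L^1(\mu)$ satisfies $\int |\log g|\, d\mu < \infty$ if and only if there exists an outer function $h \in H^1(\mu)$ with $|h| = g$ a.e. I would argue the forward direction first: assuming $\int |\log g|\, d\mu < \infty$, I would explicitly construct $h$ by exponentiating the analytic completion of $\log g$. Concretely, set
\[
h(z) = \exp\left( \int_{\mathbb{T}} \frac{\zeta + z}{\zeta - z} \log g(\zeta)\, d\mu(\zeta) \right),
\]
the exponential of the Herglotz/Poisson integral of $\log g$. Since $\log g \in L^1(\mu)$, this integral defines an analytic function on the disk whose boundary values satisfy $|h| = \exp(\log g) = g$ a.e. by the boundary behavior of the Poisson integral, and $h$ is outer by construction. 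That $h \in H^1(\mu)$ follows because $\log|h| = \log g$ is integrable and $h$ is outer, so the standard $H^1$ membership criterion for outer functions applies. For the converse, if such an outer $h \in H^1(\mu)$ exists with $|h| = g$, then the defining property of outer functions gives $\log|h(0)| = \int \log|h|\, d\mu = \int \log g\, d\mu$, and combined with $h \in H^1$ (which forces $\int \log|h|\, d\mu > -\infty$) and $g \in L^1$ (via Jensen's inequality $\int \log g\, d\mu \le \log \int g\, d\mu < \infty$ bounding the positive part), one concludes $\int |\log g|\, d\mu < \infty$.

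\smallskip

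For part (2), I would establish the equivalence by contraposition against part (1), exploiting the surjective isometry $U\colon L^p(\mathbb{T},\lambda) \to L^p(\mathbb{T},\mu)$ given by $Uf = f g^{1/p}$ from Corollary \ref{unimodu}, specialized to $p = 1$ so that $Uf = fg$. Under this isometry, $H^1(\lambda)$ (the closure of the polynomials in $L^1(\lambda)$) maps to the closure of $g \cdot \{\text{polynomials}\}$ in $L^1(\mu)$, and $L^1(\lambda)$ maps onto $L^1(\mu)$. The claim $H^1(\lambda) = L^1(\lambda)$ is then equivalent to the $z$-invariant subspace $U(H^1(\lambda))$ being all of $L^1(\mu)$. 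When $\int|\log g|\,d\mu < \infty$, part (1) supplies the outer $h$ with $|h| = g$; since outer functions are cyclic, $g = |h|$ generates a proper invariant subspace of the form $\varphi H^1(\mu)$ under $U$, so $H^1(\lambda) \ne L^1(\lambda)$. Conversely, when $\int|\log g|\,d\mu = \infty$, no outer function dominates $g$ from below in the relevant sense, and I would show the closure of $g\cdot\{\text{polynomials}\}$ cannot be a proper $\varphi H^1(\mu)$—forcing $U(H^1(\lambda)) = L^1(\mu)$ and hence $H^1(\lambda) = L^1(\lambda)$.

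\smallskip

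The main obstacle I anticipate is the converse direction of part (2): rigorously ruling out that $U(H^1(\lambda))$ is a proper invariant subspace when $\log g$ fails to be integrable. The cleanest route is to invoke the Beurling–Helson–Lowdenslager classification (as in Corollary \ref{unimodu}) to assert $U(H^1(\lambda))$ must be either $\varphi H^1(\mu)$ or all of $L^1(\mu)$, and then derive a contradiction from the proper case. In the proper case $U(H^1(\lambda)) = \varphi H^1(\mu)$, the constant function $1 \in H^1(\lambda)$ yields $g = U(1) \in \varphi H^1(\mu)$, so $g/\varphi \in H^1(\mu)$ with $|g/\varphi| = g$; since $\varphi$ is unimodular this would exhibit a function in $H^1(\mu)$ of modulus $g$, whose outer part then has log-integrable modulus, contradicting $\int|\log g|\,d\mu = \infty$. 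This reduction to the nonexistence of an $H^1$ function of modulus $g$ is precisely where part (1) is reused, so the two parts are genuinely intertwined and should be proved together.
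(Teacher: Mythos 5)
Your proposal is correct and shares its skeleton with the paper's proof: both classify $gH^{1}(\lambda)=U(H^{1}(\lambda))$ by the dichotomy of Corollary \ref{unimodu}, both extract log-integrability in the $\varphi H^{1}(\mu)$ branch from the fact that $\overline{\varphi}g$ is a nonzero $H^{1}(\mu)$ function of modulus $g$, and both use $g>0$ (via $g=U(1)$) to handle the other branch. Where you genuinely differ is in how the two branches are closed off, and your version is in one respect more self-contained. First, for the implication $\int|\log g|\,d\mu<\infty\Rightarrow H^{1}(\lambda)\neq L^{1}(\lambda)$, you prove on the spot that $gH^{1}(\lambda)=\overline{\eta}H^{1}(\mu)$ is proper by invoking cyclicity of the outer function $h$ with $|h|=g$ (the $\Vert\cdot\Vert_{1,\mu}$-closure of $h\cdot\{\text{polynomials}\}$ is $H^{1}(\mu)$); the paper instead asserts ``$gH^{1}(\lambda)=\varphi H^{1}(\mu)$ if and only if $\int|\log g|\,d\mu<\infty$'' inside the proof of Lemma \ref{cases}, although the ``if'' half is only established later (Theorem \ref{hlmd} and Corollary \ref{log}), so your cyclicity argument actually repairs a forward reference in the paper's logic. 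Second, you construct the outer function explicitly via the Herglotz integral, where the paper cites Chen's Lemma 3.2; either is fine. Two small elisions you should patch when writing this up: (i) the Beurling classification yields $\chi_{E}L^{1}(\mu)$, not ``all of $L^{1}(\mu)$'' outright, and you must add the one line that $g=\chi_{E}f$ with $g>0$ a.e.\ forces $\chi_{E}=1$ a.e.\ (the paper states this explicitly); (ii) membership $h\in H^{1}(\mu)$ for your constructed outer function requires the boundary modulus to satisfy $g\in L^{1}(\mu)$, not merely $\log g\in L^{1}(\mu)$ --- this is automatic here because $g=d\lambda/d\mu$ is a probability density, but your phrasing (``because $\log|h|=\log g$ is integrable'') cites the wrong hypothesis.
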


\begin{proof}
Clearly $H^{1} \left(  \lambda\right)  $ is a closed $z$-invariant subspace of
$L^{1} \left(  \lambda\right)  $. Thus, by corollary \ref{unimodu}, either $g
H^{1} \left(  \lambda\right)  =\varphi H^{1} \left(  \mu\right)  $ for some
unimodular $\varphi$ or $g H^{1} \left(  \lambda\right)  =\chi_{E} L^{1}
\left(  \mu\right)  $ for some Borel set $E \subset\mathbb{T}$.

For (1), if $gH^{1}(\lambda)=\varphi H^{1}(\mu)$ for some unimodular
$\varphi\text{,}$ and $0<g\in gH^{1}(\lambda)\text{,}$ then $0\neq
\overline{\varphi}g\in H^{1}(\mu)$ which implies $\log g=\log\left\vert
\overline{\varphi}g\right\vert \in L^{1}(\mu).$ It is a standard fact that if
$g>0$ and $\log g$ are in $L^{1}(\mu),$ then there exists an outer function
$h\in H^{1}(\mu)$ with the same modulus as $g,$(i.e., $\left\vert h\right\vert
=g\text{).}$ Therefore, (1) is proved by lemma 3.2 in \cite{chen2}.

For (2), Since $gH^{1}(\lambda)=\varphi H^{1}(\mu)$ if and only if
$\int\left\vert \log g\right\vert d\mu<\infty.$Suppose $\int\left\vert \log
g\right\vert d\mu=\infty\text{,}$ then $gH^{1}\left(  \lambda\right)
=\chi_{E}L^{1}\left(  \mu\right)  \text{.}$ We have $g=\chi_{E}f$ for some
$f\in L^{1}(\mu)\text{,}$ which implies $\chi_{E}=1$ since $g>0.$ Thus
$gH^{1}\left(  \lambda\right)  =L^{1}\left(  \mu\right)  =gL^{1}(\mu)\text{,}$
which implies $H^{1}(\lambda)=L^{1}(\lambda)\text{.}$ Conversely, if
$H^{1}(\lambda)=L^{1}(\lambda)\text{,}$ then $gH^{1}\left(  \lambda\right)
=gL^{1}\left(  \lambda\right)  =L^{1}\left(  \mu\right)  =\chi_{\mathbb{T}%
}L^{1}(\mu)\text{,}$ which means $gH^{1}(\lambda)\neq\varphi H^{1}%
(\mu)\text{,}$ i.e., $\int\left\vert \log g\right\vert d\mu=\infty\text{.}$
\end{proof}

There is an important characterization of outer functions in $H^{1} \left(
\mu\right)  $.

\begin{lemma}
\label{outer}A function $f$ is an outer function in $H^{1}\left(  \mu\right)
$ if and only there is a real harmonic function $u$ with harmonic conjugate
$\overline{u}$ such that \newline(1) $u\in L^{1}\left(  \mu\right)  \text{,}$
\newline(2) $f=e^{u+i\overline{u}}\text{,}$ \newline(3) $f\in L^{1}\left(
\mu\right)  \text{.}$
\end{lemma}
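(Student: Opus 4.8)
The plan is to identify both sides of the equivalence with the classical Herglotz/Poisson representation of outer functions, and to let the hypothesis $f\in L^{1}(\mu)$ supply membership in $H^{1}(\mu)$. Recall that, by definition, $f$ is outer in $H^{1}(\mu)$ precisely when $\log\left\vert f\right\vert \in L^{1}(\mu)$ and
\[
f(z)=\gamma\exp\!\left(\frac{1}{2\pi}\int_{0}^{2\pi}\frac{e^{it}+z}{e^{it}-z}\log\left\vert f(e^{it})\right\vert \,dt\right),\qquad\left\vert \gamma\right\vert =1 .
\]
Writing $\frac{e^{it}+z}{e^{it}-z}$ as the Poisson kernel plus $i$ times the conjugate Poisson kernel, the real part of the exponent is the harmonic extension $u$ of $\log\left\vert f\right\vert$ into the open unit disk and the imaginary part is a harmonic conjugate $\overline{u}$. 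Thus the whole lemma reduces to matching this representation against conditions (1)--(3), the only wrinkle being the unimodular constant $\gamma$.

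For the forward implication I would start from an outer $f$, so that $\log\left\vert f\right\vert\in L^{1}(\mu)$, and set $u$ equal to the Poisson integral of $\log\left\vert f\right\vert$. Then $u$ is real harmonic with boundary function $\log\left\vert f\right\vert\in L^{1}(\mu)$, which is (1). Taking a harmonic conjugate $\overline{u}$, the function $e^{u+i\overline{u}}$ is zero-free analytic with boundary modulus $e^{\log\left\vert f\right\vert }=\left\vert f\right\vert$, and the canonical representation gives $f=\gamma e^{u+i\overline{u}}$ for some $\left\vert \gamma\right\vert=1$. Writing $\gamma=e^{i\theta_{0}}$ and replacing $\overline{u}$ by $\overline{u}+\theta_{0}$ (still a harmonic conjugate of $u$, since conjugates are determined only up to an additive constant) yields (2), while (3) is immediate from $f\in H^{1}(\mu)\subseteq L^{1}(\mu)$.

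For the converse, given $u$, $\overline{u}$ and $f=e^{u+i\overline{u}}$ with $u\in L^{1}(\mu)$ and $f\in L^{1}(\mu)$, I would first note that $F=u+i\overline{u}$ is analytic on the disk, so $f=e^{F}$ is analytic and zero-free there, with boundary modulus $\left\vert f\right\vert=e^{u}$; hence $\log\left\vert f\right\vert=u\in L^{1}(\mu)$. Reading $u\in L^{1}(\mu)$ as the statement that $u$ is the Poisson integral of its $L^{1}$ boundary data, $u$ then coincides with the Poisson integral of $\log\left\vert f\right\vert$, so $f$ has exactly the canonical outer form above. Finally, the hypothesis $f\in L^{1}(\mu)$ promotes the zero-free analytic function $f$ to an element of $H^{1}(\mu)$, and a zero-free $H^{1}$ function presented in this exponential form is outer.

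The step I expect to be the main obstacle is the converse, specifically justifying that the real harmonic function $u$ genuinely equals the Poisson integral of its boundary values $\log\left\vert f\right\vert$ rather than differing from it by the potential of a singular measure; this is exactly what rules out an inner factor in $f$. The cleanest way to dispatch it is to take the meaning of $u\in L^{1}(\mu)$ to be that $u$ is the Poisson integral of an $L^{1}$ boundary function, so that the singular possibility is excluded at the outset and the identification with the outer form is automatic. The remaining work is then purely the bookkeeping of the constant $\gamma$ and the appeal to $f\in L^{1}(\mu)$ to secure $f\in H^{1}(\mu)$.
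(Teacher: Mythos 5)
The paper gives no proof of Lemma \ref{outer} at all: it is stated as a known characterization (essentially the canonical representation of outer functions found in Duren or Hoffman), so there is no in-paper argument to compare against, and your proposal must stand on its own. In substance it does. Matching both directions to the canonical form
\[
f(z)=\gamma\exp\left(\frac{1}{2\pi}\int_{0}^{2\pi}\frac{e^{it}+z}{e^{it}-z}\log\left\vert f(e^{it})\right\vert\,dt\right),\qquad |\gamma|=1,
\]
and absorbing $\gamma=e^{i\theta_{0}}$ into the conjugate (conjugates being determined only up to an additive real constant) is exactly the intended content. You also correctly isolate the one genuinely dangerous point: if ``$u\in L^{1}(\mu)$'' were read merely as ``the nontangential boundary values of a harmonic $u$ lie in $L^{1}$,'' the lemma would be \emph{false}. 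Take $u=-\mathrm{Re}\,\frac{1+z}{1-z}$, whose boundary values vanish a.e.: then $f=e^{u+i\overline{u}}$ is the singular inner function $e^{-(1+z)/(1-z)}$, which satisfies (1)--(3) under that weak reading but is not outer. So your decision to read (1) as ``$u$ is the Poisson integral of a real $L^{1}(\mu)$ boundary function'' is not a convenience but a necessity, and it is consistent with the only use the paper makes of the lemma (in Theorem \ref{N8}, where $u,u_{1},u_{2}$ arise precisely as Poisson integrals of $\log|h|$, $\log|h_{1}|$, $\log|h_{2}|$).

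The one step you assert without justification is the final promotion ``$f\in L^{1}(\mu)$ upgrades the zero-free analytic $f$ to $H^{1}(\mu)$.'' For a general zero-free analytic function this is false: $\exp\bigl((1+z)/(1-z)\bigr)$ is zero-free with unimodular boundary values, yet lies in no $H^{p}$. What saves your argument is that your $f$ lies in the Smirnov class $N^{+}$: writing $u$ as the Poisson integral of $\phi=\phi^{+}-\phi^{-}$ with $\phi\in L^{1}(\mu)$ real, $f$ factors as a bounded (outer-type) function times a factor $g$ with $\log|g_{r}|\geq 0$ and $\int\log^{+}|g_{r}|\,d\mu=\int\phi^{+}\,d\mu=\int\log^{+}|g^{\ast}|\,d\mu$, which gives $f\in N^{+}$; Smirnov's theorem ($f\in N^{+}$ and $f^{\ast}\in L^{1}$ imply $f\in H^{1}$), together with the observation that a function of the canonical exponential form with $\log|f^{\ast}|=\phi$ is outer by definition, is the citation your last sentence needs. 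With that reference made explicit, your proof is complete and is the standard argument the paper implicitly invokes.
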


\textbf{Through the remainder of following sections we assume }

\begin{enumerate}
\item $\alpha$ is a continuous normalized gauge norm on $L^{\infty} \left(
\mu\right)  \text{.}$

\item and that $c >0$ and $\lambda$ is a probability measure on $\mathbb{T}$
such that $\lambda< <\mu$ and $\mu< <\lambda$ and such that $\alpha$ is $c
\Vert\cdot\Vert_{1 ,\lambda}$ dominating.

\item $h \in H^{1} \left(  \mu\right)  $ is an outer function, $\eta$ is
unimodular and $\bar{\eta} h =g =d \lambda/d \mu$.
\end{enumerate}

Since $\lambda$ and $\mu$ are mutually absolutely continuous we have
$L^{\infty} (\mu) =L^{\infty} (\lambda) ,L^{\alpha} (\mu) =$ $L^{\alpha}
(\lambda)$ and $H^{\alpha} (\mu) =$ $H^{\alpha} (\lambda)\text{,}$ we will use
$L^{\infty}$ to denote $L^{\infty} (\mu)$ and $L^{\infty} (\lambda)$, use
$L^{\alpha}$ to denote $L^{\alpha} (\mu)$ and $L^{\alpha} (\lambda)$, use
$H^{\alpha}$ to denote $H^{\alpha} (\mu)$ and $H^{\alpha} (\lambda)\text{.}$
It follows that $L^{\alpha} ,L^{\infty} ,H^{\alpha}$ do not depend on
$\lambda$ or $\mu$. However, this notation slightly conflicts with the
classical notation for $L^{1} \left(  \mu\right)  =L^{\Vert\cdot\Vert_{1 ,\mu
}}$ or $H^{1} \left(  \mu\right)  =H^{\Vert\cdot\Vert_{1 ,\mu}}$, so we will
add the measure to the notation when we are talking about $L^{p}$ or $H^{p}$.

\begin{theorem}
\label{hlmd}We have $h L^{1} (\lambda) =L^{1} (\mu)$ and $h H^{1} (\lambda)
=H^{1} (\mu)\text{.}$
\end{theorem}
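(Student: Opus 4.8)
The plan is to realize multiplication by $h$ as a surjective isometry between the two $L^1$ spaces, and then transport the defining density of $H^1$ across it. Because $\eta$ is unimodular and $g>0$, we have $|h|=|\bar\eta h|=g$, so for every measurable $f$,
\[
\|hf\|_{1,\mu}=\int_{\mathbb T}|h|\,|f|\,d\mu=\int_{\mathbb T}|f|\,g\,d\mu=\|f\|_{1,\lambda}.
\]
Thus $M_h\colon f\mapsto hf$ is a linear isometry of $L^1(\lambda)$ into $L^1(\mu)$; it is onto because for $\phi\in L^1(\mu)$ the function $\phi/h$ (well defined a.e., since $h\neq0$ a.e.) lies in $L^1(\lambda)$ with $\int|\phi/h|\,d\lambda=\int|\phi|\,d\mu$ and $h\cdot(\phi/h)=\phi$. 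Hence $hL^1(\lambda)=L^1(\mu)$, the first assertion.

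A surjective isometry is a homeomorphism, so it sends norm-closures to norm-closures. Writing $\mathcal P$ for the analytic polynomials, $H^1(\lambda)=\overline{\mathcal P}^{\,\|\cdot\|_{1,\lambda}}$ by definition, whence
\[
hH^1(\lambda)=M_h\bigl(\overline{\mathcal P}^{\,\lambda}\bigr)=\overline{h\mathcal P}^{\,\|\cdot\|_{1,\mu}}.
\]
Each $hp$ (with $p\in\mathcal P$) is a product of $h\in H^1(\mu)$ and $p\in H^\infty$, hence lies in $H^1(\mu)$; since $H^1(\mu)$ is closed, $hH^1(\lambda)=\overline{h\mathcal P}$ is precisely the smallest closed $z$-invariant subspace of $H^1(\mu)$ containing $h$. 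The theorem therefore reduces to the assertion that the outer function $h$ generates all of $H^1(\mu)$.

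To prove this I would apply Corollary \ref{unimodu} with $\lambda$ replaced by $\mu$ (so that $g\equiv1$ and $U$ is the identity) to the closed $z$-invariant subspace $W=\overline{h\mathcal P}\subseteq L^1(\mu)$. It gives $W=\chi_E L^1(\mu)$ or $W=\psi H^1(\mu)$ for some unimodular $\psi$. The first case is excluded because $h\in W$ with $|h|=g>0$ a.e. forces $\chi_E=1$, so $W=L^1(\mu)$, contradicting $W\subseteq H^1(\mu)\subsetneq L^1(\mu)$. In the second case $F:=\bar\psi h\in H^1(\mu)$ has $|F|=g$; factoring $F=\theta F_{\mathrm{out}}$ into inner and outer parts, both $F_{\mathrm{out}}$ and $h$ are outer of modulus $g$, so by the representation in Lemma \ref{outer} they differ by a unimodular constant. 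Substituting back yields $\psi=c\,\bar\theta$ for a constant $c$ with $|c|=1$, and then $\psi=\psi\cdot1\in W\subseteq H^1(\mu)$ forces $\bar\theta\in H^1(\mu)$; an inner function with $\bar\theta\in H^1(\mu)$ is constant, so $\psi$ is constant and $W=H^1(\mu)$. This gives $hH^1(\lambda)=H^1(\mu)$.

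The routine part is the transport of closed subspaces across $M_h$; the genuine obstacle is the last step, namely the surjectivity onto $H^1(\mu)$, which is exactly the cyclicity of an outer function for the shift. I expect the cleanest path to be the reduction above to the already-proven Beurling-type dichotomy of Corollary \ref{unimodu}, combined with inner--outer factorization and the rigidity of outer functions of a prescribed modulus; alternatively one may simply invoke the classical fact that outer functions are cyclic in $H^1(\mu)$.
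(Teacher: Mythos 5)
Your proof is correct, and while it rests on the same two pillars as the paper's (the isometry $f\mapsto hf$ coming from $|h|=g$, and the Beurling dichotomy of Corollary \ref{unimodu}), it is structured differently and is in fact more complete. The paper's proof is two lines: it gets $hL^{1}(\lambda)=gL^{1}(\lambda)=L^{1}(\mu)$ exactly as you do, and then cites Lemma \ref{cases}(1) for the identity $gH^{1}(\lambda)=\eta H^{1}(\mu)$, concluding $hH^{1}(\lambda)=\eta gH^{1}(\lambda)=\eta\eta H^{1}(\mu)=H^{1}(\mu)$ (evidently intending $\bar{\eta}$ in one of the two places, since $\eta^{2}H^{1}(\mu)\neq H^{1}(\mu)$ in general). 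But Lemma \ref{cases}(1), as stated and proved, only yields $gH^{1}(\lambda)=\varphi H^{1}(\mu)$ for \emph{some} unimodular $\varphi$; the identification of $\varphi$ with $\bar{\eta}$ is exactly the point needing an argument, and the paper's Corollary \ref{log}, which records that identification, is itself deduced \emph{from} Theorem \ref{hlmd}. Your proposal supplies precisely this missing step: working on the $\mu$ side with the cyclic invariant subspace $W=\overline{h\mathcal{P}}$, you rule out $W=\chi_{E}L^{1}(\mu)$ because $|h|=g>0$ a.e.\ forces $E=\mathbb{T}$ while $W\subseteq H^{1}(\mu)\subsetneq L^{1}(\mu)$, and in the case $W=\psi H^{1}(\mu)$ you kill the inner factor by the rigidity of outer functions (two outer functions with equal boundary modulus differ by a unimodular constant, via the representation in Lemma \ref{outer}) together with the correct observation that an inner $\theta$ with $\bar{\theta}\in H^{1}(\mu)$ has only the zeroth Fourier coefficient and is therefore constant. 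In effect you prove the cyclicity of the outer function $h$ for the shift on $H^{1}(\mu)$, which the paper treats as absorbed into Lemma \ref{cases}; what your route buys is a self-contained argument that repairs the paper's circular citation and its $\eta\eta$ typo, at the cost of a somewhat longer proof than the paper's algebraic shortcut.
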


\begin{proof}
We know from our assumption (3) that $hL^{1}(\lambda)=g\eta L^{1}\left(
\lambda\right)  =gL^{1}\left(  \lambda\right)  =L^{1}\left(  \mu\right)
\text{. }$By lemma \ref{cases}(1), we have $gH^{1}\left(  \lambda\right)
=\eta H^{1}\left(  \mu\right)  \text{,}$ so
\[
hH^{1}\left(  \lambda\right)  =\eta gH^{1}\left(  \lambda\right)  =\eta\eta
H^{1}\left(  \mu\right)  =H^{1}\left(  \mu\right)  \text{.}%
\]

\end{proof}

\begin{corollary}
\label{log}$g H^{1} (\lambda) =\gamma H^{1} (\mu)$ for some unimodular
$\gamma\Leftrightarrow\int_{\mathbb{T}}\left\vert \log g\right\vert
d\mu<\infty\text{.}$
\end{corollary}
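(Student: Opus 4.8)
The plan is to prove Corollary \ref{log} as a direct consequence of Lemma \ref{cases} together with the analysis already set up in assumption (3) and Theorem \ref{hlmd}. The statement is a biconditional, so I would prove each direction separately, but in fact both directions are essentially bookkeeping once the earlier results are in hand. The key observation is that $\int_{\mathbb{T}}|\log g|\,d\mu<\infty$ has already been characterized in Lemma \ref{cases}(1) as equivalent to the existence of an outer function $h\in H^1(\mu)$ with $|h|=g$, and in the proof of that lemma it was shown that $gH^1(\lambda)=\varphi H^1(\mu)$ for some unimodular $\varphi$ precisely in this case.

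For the direction $(\Leftarrow)$, I would assume $\int_{\mathbb{T}}|\log g|\,d\mu<\infty$. By Lemma \ref{cases}(1) there is an outer function $h\in H^1(\mu)$ with $|h|=g$, so we may write $g=\bar\eta h$ for a unimodular $\eta$ as in assumption (3). Applying Theorem \ref{hlmd} gives $hH^1(\lambda)=H^1(\mu)$, hence
\[
gH^1(\lambda)=\bar\eta h H^1(\lambda)=\bar\eta H^1(\mu),
\]
which is exactly $gH^1(\lambda)=\gamma H^1(\mu)$ with unimodular $\gamma=\bar\eta$. This establishes one implication.

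For the direction $(\Rightarrow)$, I would argue by contraposition using Lemma \ref{cases}(2). Suppose $\int_{\mathbb{T}}|\log g|\,d\mu=\infty$. Then Lemma \ref{cases}(2) gives $H^1(\lambda)=L^1(\lambda)$, and multiplying through by $g$ yields $gH^1(\lambda)=gL^1(\lambda)=L^1(\mu)=\chi_{\mathbb{T}}L^1(\mu)$ (this is the computation already carried out in the proof of Lemma \ref{cases}(2)). Thus $gH^1(\lambda)$ fills out all of $L^1(\mu)$ and cannot equal $\varphi H^1(\mu)$ for any unimodular $\varphi$, since $\varphi H^1(\mu)$ is a proper $z$-invariant subspace of $L^1(\mu)$ by the classical Beurling theorem. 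Contrapositively, if $gH^1(\lambda)=\gamma H^1(\mu)$ for some unimodular $\gamma$, then $\int_{\mathbb{T}}|\log g|\,d\mu<\infty$.

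I do not expect a genuine obstacle here; the corollary is a repackaging of the dichotomy from Corollary \ref{unimodu} applied to $W=H^1(\lambda)$, with the two alternatives matched against the two cases of Lemma \ref{cases}. The only point requiring a little care is verifying that $\varphi H^1(\mu)\neq L^1(\mu)$ for unimodular $\varphi$ — that is, that the two alternatives in the Beurling dichotomy are genuinely mutually exclusive — which follows because $\varphi H^1(\mu)$ omits, for instance, the function $\bar z\varphi$, so the two cases of Lemma \ref{cases} are exhaustive and disjoint.
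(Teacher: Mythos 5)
Your proposal is correct, and your $(\Leftarrow)$ direction coincides with the paper's own argument: invoke Lemma \ref{cases}(1) to produce the outer $h\in H^1(\mu)$ with $|h|=g$, apply Theorem \ref{hlmd} to get $hH^1(\lambda)=H^1(\mu)$, and peel off the unimodular factor. Your $(\Rightarrow)$ direction, however, takes a genuinely different route. The paper argues directly: since $1\in H^1(\lambda)$, we have $g\in gH^1(\lambda)=\gamma H^1(\mu)$, so $g=\gamma\phi$ with $0\neq\phi\in H^1(\mu)$; factoring $\phi$ into inner times outer and using log-integrability of the modulus of an outer function yields $\log g=\log|\phi|\in L^1(\mu)$ in one step. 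You instead contrapose through Lemma \ref{cases}(2): if $\int_{\mathbb{T}}|\log g|\,d\mu=\infty$ then $H^1(\lambda)=L^1(\lambda)$, hence $gH^1(\lambda)=gL^1(\lambda)=L^1(\mu)$, which cannot equal $\gamma H^1(\mu)$ because $\bar z\gamma\notin\gamma H^1(\mu)$; your explicit check that the two Beurling alternatives are disjoint is a point the paper leaves implicit, and your argument does need it. One caveat you could not have seen blind: the paper's proof of Lemma \ref{cases}(2) itself opens by citing the equivalence ``$gH^1(\lambda)=\varphi H^1(\mu)$ if and only if $\int_{\mathbb{T}}|\log g|\,d\mu<\infty$'' --- which is precisely the statement of Corollary \ref{log} --- so unwinding the citations makes your route look circular; it survives only because the single implication you actually need, namely $gH^1(\lambda)=\varphi H^1(\mu)\Rightarrow\log g\in L^1(\mu)$, is established independently inside the proof of Lemma \ref{cases} by the same factorization argument the paper later repeats in the corollary. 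Thus the paper's direct proof buys logical self-containment in an admittedly tangled stretch of the paper, while yours buys brevity and makes transparent your (accurate) structural point that the corollary is the dichotomy of Corollary \ref{unimodu} applied to $W=H^1(\lambda)$ with the alternatives matched to the two cases of Lemma \ref{cases}.
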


\begin{proof}
Assume $g H^{1} (\lambda) =\gamma H^{1} (\mu)$, Since $1 \in H^{1} (\lambda)
,g \in g H^{1} (\lambda)$, $\exists\phi\in H^{1} (\mu)$ such that $g
=\gamma\phi$. Since $\phi\in H^{1} (\mu) ,\phi=\psi h$, where $\psi$ is an
inner function and $h$ is an outer function. Thus, $\int_{\mathbb{T}%
}\left\vert \log g\right\vert d\mu=\int_{\mathbb{T}}\log\vert g\vert d\mu
=\int_{\mathbb{T}}\log\vert h\vert d\mu<\infty$, since $h$ is an outer function.

Assume $\int_{\mathbb{T}}\left\vert \log g\right\vert d\mu<\infty,g$ and $\log
g \in L^{1} (\mu) ,g >0.$ Thus there exists an outer function $h \in H^{1}
(\mu)$, such that $\vert h\vert=\vert g\vert=g ,\left\vert h\right\vert =\phi
h ,\vert\phi\vert=1 ,g =\eta h$, Define $V :L^{1} (\lambda) \longrightarrow
L^{1} (\mu)$ by $V f =h f$, as in the theorem \ref{hlmd}, we have $h H^{1}
(\lambda) =H^{1} (\mu)$, so $g H^{1} (\lambda) =\eta h H^{1} (\lambda) =\eta
H^{1} (\mu)\text{.}$ Let\ $\gamma=\eta\text{,}$ then $g H^{1} (\lambda)
=\gamma H^{1} (\mu)\text{.}$
\end{proof}

We now get a Helson-Lowdenslager theorem when $\alpha=\Vert\cdot\Vert_{p
,\lambda}$ and $\log g$ $\in L^{1} \left(  \mu\right)  $.

\begin{corollary}
Suppose $1 \leq p <\infty$. If $W$ is a closed subspace of $L^{p} (\lambda)$
and $z W \subseteq W$, then either $W =\gamma H^{p} (\lambda)$ for some
unimodular function $\gamma$, or $W =\chi_{E} L^{p} (\lambda)$ for some Borel
subset $E$ of $\mathbb{T}$.
\end{corollary}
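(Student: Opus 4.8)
The plan is to reduce this $L^p(\lambda)$ statement to the classical Beurling theorem on $L^p(\mu)$ by transporting everything through multiplication by $g^{1/p}$, exactly as in Corollary~\ref{unimodu}. The key observation is that Corollary~\ref{unimodu} already does almost all the work: it tells us that if $W$ is a closed $z$-invariant subspace of $L^p(\lambda)$, then $g^{1/p}W = U(W)$ is a closed $z$-invariant subspace of $L^p(\mu)$, and hence equals either $\chi_F L^p(\mu)$ for some Borel set $F$ or $\Phi H^p(\mu)$ for some unimodular $\Phi$. So the entire task is to translate these two conclusions back across the isometry $U f = g^{1/p} f$ into the desired descriptions $\chi_E L^p(\lambda)$ and $\gamma H^p(\lambda)$, and here is where the hypothesis $\log g \in L^1(\mu)$ (equivalently, the existence of the outer function $h$ with $|h| = g$ from Lemma~\ref{cases}(1)) becomes essential.

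First I would treat the set case. If $g^{1/p}W = \chi_F L^p(\mu)$, then applying $U^{-1}$ (multiplication by $g^{-1/p}$) gives $W = g^{-1/p}\chi_F L^p(\mu) = \chi_F\big(g^{-1/p}L^p(\mu)\big)$. Since $\|g^{-1/p}f\|_{p,\lambda}^p = \int |f|^p g^{-1} g\, d\mu = \|f\|_{p,\mu}^p$, multiplication by $g^{-1/p}$ carries $L^p(\mu)$ isometrically onto $L^p(\lambda)$, so $g^{-1/p}L^p(\mu) = L^p(\lambda)$ and therefore $W = \chi_F L^p(\lambda)$; taking $E = F$ finishes this case. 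This part uses only mutual absolute continuity and is routine.

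The substantive case is $g^{1/p}W = \Phi H^p(\mu)$. Here I would use the outer function $h \in H^1(\mu)$ with $|h| = g$ supplied by Lemma~\ref{cases}(1), and its $p$-th root $h^{1/p}$, which is an outer function in $H^p(\mu)$ with $|h^{1/p}| = g^{1/p}$. Writing $h^{1/p} = g^{1/p}\theta$ for a unimodular $\theta$, the idea is that multiplication by $h^{1/p}$ should carry $H^p(\lambda)$ onto $H^p(\mu)$, the $p$-analogue of the identity $hH^1(\lambda) = H^1(\mu)$ proved in Theorem~\ref{hlmd}. Granting $h^{1/p}H^p(\lambda) = H^p(\mu)$, I compute $W = g^{-1/p}\Phi H^p(\mu) = g^{-1/p}\Phi\, h^{1/p} H^p(\lambda) = \Phi\theta\, H^p(\lambda)$, and since $\Phi\theta$ is unimodular we may set $\gamma = \Phi\theta$ to conclude $W = \gamma H^p(\lambda)$.

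The main obstacle I anticipate is justifying the identity $h^{1/p}H^p(\lambda) = H^p(\mu)$ at the level of $L^p$-closures rather than just $L^1$. The clean way is to observe that $V f = h^{1/p} f$ is a surjective isometry from $L^p(\lambda)$ onto $L^p(\mu)$ (by the same modulus computation as above, since $|h^{1/p}|^p = g$), and that because $h^{1/p}$ is outer, multiplication by it sends polynomials into $H^p(\mu)$ and, being outer, does not create or destroy analyticity; one then checks that $V$ maps the $\|\cdot\|_{p,\lambda}$-closure of the polynomials onto the $\|\cdot\|_{p,\mu}$-closure of the polynomials. The delicate point is the surjectivity onto all of $H^p(\mu)$: one must use that $h^{1/p}$ is outer so that $1/h^{1/p}$ is a limit, in the appropriate sense, of analytic functions, allowing every element of $H^p(\mu)$ to be pulled back into $H^p(\lambda)$. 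This is precisely the $p$-version of the outer-function argument already carried out for $p=1$ in Theorem~\ref{hlmd} and Corollary~\ref{log}, so I would model the proof on those, replacing $h$ by $h^{1/p}$ throughout.
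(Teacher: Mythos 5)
Your proof is correct and takes essentially the route the paper intends: the paper states this corollary without proof as an immediate consequence of Corollary \ref{unimodu}, and your argument supplies exactly the missing translation step, using the outer $p$-th root $h^{1/p}$ (with $|h^{1/p}|=g^{1/p}$) and the identity $h^{1/p}H^{p}(\lambda)=H^{p}(\mu)$, the $p$-analogue of Theorem \ref{hlmd}, to convert $\Phi H^{p}(\mu)$ into $\gamma H^{p}(\lambda)$ with $\gamma=\Phi\theta$ unimodular. The one step worth stating crisply is your surjectivity claim: since $V f=h^{1/p}f$ is a surjective isometry of $L^{p}(\lambda)$ onto $L^{p}(\mu)$, it carries the $\Vert\cdot\Vert_{p,\lambda}$-closure of the polynomials onto the $\Vert\cdot\Vert_{p,\mu}$-closure of $h^{1/p}\cdot\{\text{polynomials}\}$, and the latter equals $H^{p}(\mu)$ precisely because an outer function in $H^{p}(\mu)$ is a cyclic vector for the shift, which is the standard fact replacing your informal remark that $1/h^{1/p}$ is ``a limit of analytic functions.''
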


The following theorem shows the relation between $H^{\alpha},H^{1}(\lambda)$
and $L^{\alpha}$. This result parallels a result of Y Chen \cite{chen2}, which
is a key ingredient in her proof of her general Beurling theorem. However, her
result was for $H^{1}\left(  \mu\right)  $ instead of $H^{1}\left(
\lambda\right)  $.

\begin{theorem}
\label{N8} $H^{\alpha} =H^{1} (\lambda) \cap L^{\alpha}\text{.}$
\end{theorem}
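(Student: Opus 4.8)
The plan is to prove the two inclusions separately, the inclusion $H^{\alpha}\subseteq H^{1}(\lambda)\cap L^{\alpha}$ being routine and the reverse inclusion being the substance. First I would dispatch $H^{\alpha}\subseteq H^{1}(\lambda)\cap L^{\alpha}$. Since $H^{\alpha}$ is by definition the $\alpha$-closure of the polynomials and the polynomials lie in $L^{\infty}$, we have $H^{\alpha}\subseteq L^{\alpha}$ at once. Moreover, because $\alpha\geq c\Vert\cdot\Vert_{1,\lambda}$, every $\alpha$-convergent sequence of polynomials is $\Vert\cdot\Vert_{1,\lambda}$-convergent to the same limit; since the polynomials are $\Vert\cdot\Vert_{1,\lambda}$-dense in $H^{1}(\lambda)$ by definition, this gives $H^{\alpha}\subseteq H^{1}(\lambda)$.

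For the reverse inclusion I would use a Hahn--Banach / annihilator argument based on Lemma \ref{dual}. Fix $f\in H^{1}(\lambda)\cap L^{\alpha}$. Since $H^{\alpha}$ is a closed subspace of $L^{\alpha}$, it suffices to show that every $\alpha$-continuous functional annihilating $H^{\alpha}$ also annihilates $f$. By Lemma \ref{dual}(2b), such a functional has the form $\varphi_{w}(\,\cdot\,)=\int_{\mathbb{T}}(\,\cdot\,)w\,d\lambda$ with $w\in L^{1}(\lambda)$ and $wL^{\alpha}\subseteq L^{1}(\lambda)$. The condition that $\varphi_{w}$ kill $H^{\alpha}$ is equivalent to killing every monomial $z^{n}$, $n\geq 0$, i.e.\ $\int_{\mathbb{T}}z^{n}wg\,d\mu=0$ for all $n\geq0$, where $g=d\lambda/d\mu$. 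Writing $W:=wg$, note $\int_{\mathbb{T}}|W|\,d\mu=\Vert w\Vert_{1,\lambda}<\infty$, so $W\in L^{1}(\mu)$ and the vanishing of these integrals says precisely that $W$ lies in $H^{1}_{0}(\mu):=\{\phi\in H^{1}(\mu):\int_{\mathbb{T}}\phi\,d\mu=0\}=zH^{1}(\mu)$. The goal thus reduces to showing $\int_{\mathbb{T}}fW\,d\mu=\int_{\mathbb{T}}fw\,d\lambda=0$.

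The heart of the argument is a Smirnov-class computation, and this is where assumption (3) and the outerness of $h$ enter decisively. Since $wL^{\alpha}\subseteq L^{1}(\lambda)$ and $f\in L^{\alpha}$, we have $\int_{\mathbb{T}}|fW|\,d\mu=\int_{\mathbb{T}}|f|\,|w|\,d\lambda<\infty$, so $fW\in L^{1}(\mu)$. Next, by Theorem \ref{hlmd} we have $hf\in hH^{1}(\lambda)=H^{1}(\mu)$; because $h$ is an outer function, $1/h$ belongs to the Smirnov class $N^{+}$, and hence $f=(hf)/h\in N^{+}$. As $W\in H^{1}(\mu)\subseteq N^{+}$ and $N^{+}$ is closed under multiplication, $fW\in N^{+}$. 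Smirnov's theorem ($N^{+}\cap L^{1}(\mu)=H^{1}(\mu)$) then forces $fW\in H^{1}(\mu)$. Finally, evaluating the analytic extension at the origin, $\widehat{fW}(0)=\widetilde{f}(0)\,\widetilde{W}(0)=0$ because $W\in H^{1}_{0}(\mu)$ vanishes at $0$; thus $\int_{\mathbb{T}}fW\,d\mu=\widehat{fW}(0)=0$, as required.

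I expect the main obstacle to be the justification of the Smirnov-class steps---namely that $f\in N^{+}$ (which rests on the outer factorization furnished by Lemma \ref{cases} and Lemma \ref{outer}) and the identity $N^{+}\cap L^{1}(\mu)=H^{1}(\mu)$---together with the careful bookkeeping that $\varphi_{w}(f)=\int_{\mathbb{T}}fw\,d\lambda$ continues to hold for $f\in L^{\alpha}$ rather than merely $f\in L^{\infty}$, and that $fW\in L^{1}(\mu)$. Once these are in place, the vanishing of the zeroth Fourier coefficient of the analytic product $fW$ closes the argument.
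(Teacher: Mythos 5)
Your proposal is correct, and it follows the same overall skeleton as the paper's proof: the easy inclusion via $\alpha\geq c\Vert\cdot\Vert_{1,\lambda}$, then a Hahn--Banach annihilator argument in which Lemma \ref{dual}(2b) represents an annihilating functional by a $w\in L^{1}(\lambda)$ with $wL^{\alpha}\subseteq L^{1}(\lambda)$, the annihilation of the monomials forces $wg=hw\bar{\eta}$ (your $W$, the paper's $hw$ up to the unimodular absorption) into $H^{1}_{0}(\mu)$, and Theorem \ref{hlmd} supplies $hf\in H^{1}(\mu)$ for $f\in H^{1}(\lambda)\cap L^{\alpha}$. Where you genuinely diverge is in the central analytic step showing that the product $fW=hfw$ lies in $H^{1}(\mu)$: the paper factors $hf=\gamma_{1}h_{1}$ and $hw=z\gamma_{2}h_{2}$ into inner and outer parts, writes $h,h_{1},h_{2}$ as $e^{u+i\overline{u}}$ via Lemma \ref{outer}, and verifies by hand that $z\gamma_{1}\gamma_{2}e^{(u_{1}+u_{2}-u)+i(\overline{u}_{1}+\overline{u}_{2}-\overline{u})}$ together with membership in $L^{1}(\mu)$ puts $hfw$ in $H^{1}(\mu)$; you instead observe that $f=(hf)/h\in N^{+}$ because $h$ is outer (so $1/h\in N^{+}$), that $N^{+}$ is multiplicatively closed, and then invoke Smirnov's theorem $N^{+}\cap L^{1}(\mu)=H^{1}(\mu)$, with the integrability $fW\in L^{1}(\mu)$ coming, exactly as in the paper, from $wL^{\alpha}\subseteq L^{1}(\lambda)$. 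The paper's explicit exponential computation is in effect a hands-on reproof of the instance of Smirnov's theorem that it needs, so your route is cleaner and makes transparent exactly where outerness of $h$ is used, at the cost of importing a standard but uncited theorem (it is in Duren, which the paper does reference); both arguments end identically, by evaluating the $H^{1}_{0}$ product at the origin, $(fW)(0)=f(0)W(0)=0$, and closing with Hahn--Banach. One cosmetic caution: you describe $H^{\alpha}$ as the $\alpha$-closure of the polynomials, while the paper works with $\overline{H^{\infty}}^{\alpha}$; for the annihilator argument this is harmless, since a functional annihilating $H^{\alpha}$ in either sense annihilates all $z^{n}$, which is the only direction you use.
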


\begin{proof}
Since $\alpha$ is continuous $c \Vert\cdot\Vert_{1 ,\lambda}$-dominating,
$\alpha$-convergence implies $\Vert\cdot\Vert_{1 ,\lambda}$-convergence, thus
\[
H^{\alpha} =\overline{H^{\infty}}^{\alpha} \subseteq\overline{H^{\infty}%
}^{\Vert\cdot\Vert_{1 ,\lambda}} =H^{1} (\lambda)\text{.}%
\]
Also,%
\[
H^{\alpha} =\overline{H^{\infty} (\lambda)}^{\alpha} \subset\overline
{L^{\infty}}^{\alpha} =L^{\alpha}\text{.}%
\]
Thus $H^{\alpha} \subseteq H^{1} (\lambda) \cap L^{\alpha}\text{.}$

Since $\alpha$-convergence implies $\Vert\cdot\Vert_{1,\lambda}$-convergence,
$H^{1}(\lambda)\cap$ $L^{\alpha}$ is an $\alpha$-closed subspace of
$L^{\alpha}$. Suppose $\varphi\in\left(  L^{\alpha}\right)  ^{\#}$ such that
$\varphi|H^{\infty}=0$. It follows from lemma \ref{dual} that there is a $w\in
L^{1}\left(  \lambda\right)  $ such that $wL^{\alpha}\subset L^{1}\left(
\lambda\right)  $ and such that, for every $f\in L^{\alpha}$,
\[
\varphi\left(  f\right)  =\int f\bar{\eta}wd\lambda=\int fwhd\mu\text{.}%
\]
Since $wL^{\alpha}\subset L^{1}\left(  \lambda\right)  $, we know that
$whL^{\alpha}\subset L^{1}\left(  \mu\right)  $. Since $\varphi|_{H^{\infty}%
}=0$, we have%
\[
\int_{\mathbb{T}}z^{n}hwd\mu=\varphi\left(  z^{n}\right)  =0
\]
for every integer $n\geq0.$ Thus $hw\in H_{0}^{1}\left(  \mu\right)  $.

Now suppose $f\in H^{1}(\lambda)\cap L^{\alpha}$. Then $hf\in H^{1}\left(
\mu\right)  $. We know that every function in $H^{1}\left(  \mu\right)  $ has
a unique inner-outer factorization. Thus we can write%
\[
hf=\gamma_{1}h_{1}%
\]
with $\gamma_{1}$ inner and $h_{1}$ outer. Moreover, since $hw\in H_{0}%
^{1}\left(  \mu\right)  $, we can write%
\[
\left(  hw\right)  \left(  z\right)  =z\gamma_{2}\left(  z\right)
h_{2}\left(  z\right)
\]
with $\gamma_{2}$ inner and $h_{2}$ outer. By lemma \ref{outer}, we can find
real harmonic functions $u,u_{1},u_{2}\in L^{1}\left(  \mu\right)  $ such that%
\[
h=e^{u+i\overline{u}},h_{1}=e^{u_{1}+i\overline{u}_{1}},\; \text{and}%
\;h_{2}=e^{u_{2}+i\overline{u}_{2}}\text{.}%
\]
Thus
\[
hfw=hfhw/h=z\gamma_{1}\gamma_{2}e^{\left(  u_{1}+u_{2}-u\right)  +i\left(
\overline{u}_{1}+\overline{u}_{2}-\overline{u}\right)  }\in H^{1}\left(
\mu\right)  \text{.}%
\]
It follows from lemma \ref{outer} that%
\[
\varphi\left(  f\right)  =\int_{\mathbb{T}}hfwd\mu=\left(  hfw\right)  \left(
0\right)  =0.
\]
Hence every continuous linear functional on $L^{\alpha}$ that annihilates
$H^{\alpha}$ also annihilates $H^{1}\left(  \lambda\right)  \cap L^{\alpha}$.
It follows from the Hahn-Banach theorem that $H^{1}\left(  \lambda\right)
\cap L^{\alpha}\subset H^{\alpha}\text{.}$
\end{proof}

The following result is a factorization theorem for $L^{\alpha}\text{.}$

\begin{theorem}
\label{factorization} If $k \in L^{\infty}$, $k^{ -1} \in$ $L^{\alpha}$, then
there is a unimodular function $u \in L^{\infty}$ and an outer function $s \in
H^{\infty}$ such that $k =u s$ and $s^{ -1} \in$ $H^{\alpha}$.
\end{theorem}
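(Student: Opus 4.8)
The plan is to build $s$ as the outer function whose boundary modulus is $|k|$, to set $u=k/s$, and then to verify the membership $s^{-1}\in H^{\alpha}$ by combining Theorem \ref{N8} with Theorem \ref{hlmd}. The whole construction hinges on first showing $\log|k|\in L^{1}(\mu)$.

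First I would establish $\log|k|\in L^{1}(\mu)$. The positive part is immediate: since $k\in L^{\infty}$, $\log^{+}|k|\leq\log^{+}\Vert k\Vert_{\infty}\in L^{1}(\mu)$. For the negative part, recall $g=|h|=d\lambda/d\mu$ and that $\alpha$ is $c\Vert\cdot\Vert_{1,\lambda}$-dominating; hence $k^{-1}\in L^{\alpha}$ forces $1/|k|\in L^{1}(\lambda)$, i.e. $g/|k|\in L^{1}(\mu)$. Writing $-\log|k|=\log(g/|k|)-\log g$ gives $\log^{-}|k|\leq\log^{+}(g/|k|)+\log^{-}g\leq (g/|k|)+|\log g|$, and by assumption (3) (equivalently Lemma \ref{cases}(1)) we have $\log g\in L^{1}(\mu)$. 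Thus $\log^{-}|k|\in L^{1}(\mu)$ and $\log|k|\in L^{1}(\mu)$.

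With $\log|k|\in L^{1}(\mu)$ I would take $v$ to be the real harmonic Poisson extension of $\log|k|$ and set $s=e^{v+i\bar v}$; by Lemma \ref{outer} this is outer in $H^{1}(\mu)$ with $|s|=|k|$ a.e., and since $|s|=|k|\leq\Vert k\Vert_{\infty}$ in fact $s\in H^{\infty}$. Then $u=k/s$ satisfies $|u|=|k|/|s|=1$ a.e., so $u\in L^{\infty}$ is unimodular with $k=us$. It remains to show $s^{-1}\in H^{\alpha}$, and by Theorem \ref{N8} it suffices to prove $s^{-1}\in L^{\alpha}$ and $s^{-1}\in H^{1}(\lambda)$. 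The first holds because $|s^{-1}|=1/|k|=|k^{-1}|$ and $L^{\alpha}$ depends only on the modulus: multiplying an $\alpha$-approximating sequence for $k^{-1}$ by the unimodular factor $s^{-1}/k^{-1}$ produces one for $s^{-1}$. For the second I use Theorem \ref{hlmd}, namely $hH^{1}(\lambda)=H^{1}(\mu)$, so $s^{-1}\in H^{1}(\lambda)$ is equivalent to $h/s\in H^{1}(\mu)$. Writing $h=e^{v_{h}+i\bar v_{h}}$ via Lemma \ref{outer}, the quotient $h/s=e^{(v_{h}-v)+i(\bar v_{h}-\bar v)}$ has $v_{h}-v\in L^{1}(\mu)$ and $\Vert h/s\Vert_{1,\mu}=\int_{\mathbb{T}}(g/|k|)\,d\mu=\Vert k^{-1}\Vert_{1,\lambda}\leq c^{-1}\alpha(k^{-1})<\infty$, so Lemma \ref{outer} places $h/s$ in $H^{1}(\mu)$; hence $s^{-1}\in H^{1}(\lambda)$ and therefore $s^{-1}\in H^{\alpha}$.

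The main obstacle is the first step. Producing the outer factor requires $\log|k|\in L^{1}(\mu)$, and this is precisely where the hypothesis $k^{-1}\in L^{\alpha}$ must be converted—through the domination by $\Vert\cdot\Vert_{1,\lambda}$ and the integrability of $\log g$—into integrability of $\log|k|$. Once $s$ is built, the verification that $s^{-1}\in H^{\alpha}$ is routine: it only combines the solidity of $L^{\alpha}$ with the multiplier description of $H^{1}(\lambda)$ from Theorem \ref{hlmd} and the identification $H^{\alpha}=H^{1}(\lambda)\cap L^{\alpha}$ from Theorem \ref{N8}.
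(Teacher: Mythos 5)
Your proposal is correct and takes essentially the same route as the paper: both arguments first convert $k^{-1}\in L^{\alpha}\subseteq L^{1}(\lambda)$ together with $\log g=\log|h|\in L^{1}(\mu)$ into $\log|k|\in L^{1}(\mu)$, build the outer function $s\in H^{\infty}$ with $|s|=|k|$ and set $u=k/s$, and then place $s^{-1}$ in $H^{1}(\lambda)\cap L^{\alpha}=H^{\alpha}$ using $hH^{1}(\lambda)=H^{1}(\mu)$ (Theorem \ref{hlmd}) and Theorem \ref{N8}. The only (cosmetic) divergence is in verifying $h/s\in H^{1}(\mu)$: you exponentiate directly via Lemma \ref{outer}, whereas the paper constructs an auxiliary outer $f$ with $|f|=|hk^{-1}|$ and uses $h=e^{it}sf$ --- both rest on the same standard fact about log-integrable moduli.
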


\begin{proof}
Recall that an outer function is uniquely determined by its absolute boundary
values, which are necessarily absolutely log integrable. Since $k^{ -1} \in$
$L^{\alpha} \subseteq L^{1} (\lambda)$, we know that $\left\Vert k\right\Vert
_{\infty} >0$. Thus $\log\left\vert k\right\vert \leq\log\left\Vert
k\right\Vert _{\infty} \in\mathbb{R}$. Moreover, $k^{ -1} \in$ $L^{\alpha}
\subseteq L^{1} (\lambda)$ implies $h k^{ -1} \in L^{1} \left(  \mu\right)
\text{,}$ so
\[
\log\left\vert h\right\vert -\log\left\vert k\right\vert =\log\left(
\left\vert h k^{ -1}\right\vert \right)  \leq\left\vert h k^{ -1}\right\vert
\text{.}%
\]
Hence%
\[
\log\left\vert h\right\vert -\left\vert h k^{ -1}\right\vert \leq
\log\left\vert k\right\vert \leq\log\left\Vert k\right\Vert _{\infty}\text{,}%
\]
and since $\log\left\vert h\right\vert \text{,}$ $\left\vert h k^{
-1}\right\vert $ and $\log\left\Vert k\right\Vert _{\infty}$ are in $L^{1}
\left(  \mu\right)  $, we see that $\log\left\vert k\right\vert \in L^{1}
\left(  \mu\right)  $. Therefore, by the first statement of lemma \ref{cases},
there is an outer function $s \in H^{1} \left(  \mu\right)  $ such that
$\left\vert s\right\vert =\left\vert k\right\vert $. It follows that $s \in
H^{\infty}$. Hence there is a unimodular function $u$ such that $k =u
s\text{.}$

We also know that%
\[
\left\vert \log\left\vert h k^{ -1}\right\vert \right\vert =\left\vert
\log\left(  \left\vert h\right\vert \right)  -\log\left\vert k\right\vert
\right\vert \leq\left\vert \log\left(  \left\vert h\right\vert \right)
\right\vert +\left\vert \log\left\vert k\right\vert \right\vert \in L^{1}
\left(  \mu\right)  \text{,}%
\]
so there exists an outer function $f \in H^{1} (\mu)$ such that $\vert k^{ -1}
h\vert=\vert f\vert$. Thus $s f$ is outer in $H^{1} \left(  \mu\right)  $ and
$\left\vert h\right\vert =\left\vert s f\right\vert $, so $h =e^{i t} s f$ for
some real number $t\text{.}$ Since $H^{1} (\mu) =h H^{1} (\lambda)$, we see
that there exists a function $f_{1} \in H^{1} (\lambda)$ such that $h f_{1} =f
=h \left(  e^{ -i t} s^{ -1}\right)  \text{.}$ It follows that $s^{ -1} =e^{i
t} f_{1} \in H^{1} \left(  \lambda\right)  $. Also, $\left\vert s^{
-1}\right\vert =\left\vert k^{ -1}\right\vert $, so $s^{ -1} \in L^{\alpha
}\text{.}$ It follows from Theorem \ref{N8} that $s^{ -1} \in H^{1} (\lambda)
\cap L^{\alpha} =$ $H^{\alpha}$.
\end{proof}

\begin{lemma}
\label{hift}If $M$ is a closed subspace of $L^{\alpha}$ and $z M \subseteq M$,
then $H^{\infty} M \subseteq M$.
\end{lemma}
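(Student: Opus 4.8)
The plan is to first reduce to analytic polynomials and then pass to a general $\phi\in H^{\infty}$ by a summability (Fej\'er) argument combined with the continuity of $\alpha$. Since $zM\subseteq M$, iterating gives $z^{n}M\subseteq M$ for every $n\geq0$, and since $M$ is a linear subspace, $pM\subseteq M$ for every analytic polynomial $p$. The goal is therefore to show that for each $\phi\in H^{\infty}$ and each $f\in M$, the product $\phi f$ lies in the $\alpha$-closure of $\{pf:p\text{ an analytic polynomial}\}\subseteq M$.

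The engine is the standard monotonicity/multiplication property of gauge norms, $\alpha(kf)\leq\left\Vert k\right\Vert_{\infty}\alpha(f)$ for $k\in L^{\infty}$ and $f\in L^{\alpha}$ (a consequence of $\alpha(|f|)=\alpha(f)$); in particular, multiplication by a fixed $L^{\infty}$ function is $\alpha$-continuous on $L^{\alpha}$. For $\phi\in H^{\infty}$ I would use the Fej\'er means $\sigma_{N}(\phi)=\sum_{k=0}^{N}\left(1-\tfrac{k}{N+1}\right)\hat{\phi}(k)z^{k}$; these are analytic polynomials (since $\hat{\phi}(k)=0$ for $k<0$), they satisfy $\left\Vert\sigma_{N}(\phi)\right\Vert_{\infty}\leq\left\Vert\phi\right\Vert_{\infty}$, and $\sigma_{N}(\phi)\to\phi$ a.e.$(\mu)$ with uniformly bounded sup norms. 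Hence $\sigma_{N}(\phi)f\in M$ for every $N$, and it remains to prove $\alpha(\sigma_{N}(\phi)f-\phi f)\to0$.

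This last convergence is the main obstacle, since $\alpha$ has no a priori dominated convergence theorem; the argument must be driven solely by the continuity hypothesis $\mu(E_{n})\to0\Rightarrow\alpha(\chi_{E_{n}})\to0$. I would first dispose of the unbounded part of $f$: given $\varepsilon>0$, choose $b\in L^{\infty}$ with $\alpha(f-b)<\varepsilon$ (possible as $f\in L^{\alpha}=\overline{L^{\infty}}^{\alpha}$), so that $\alpha((\phi-\sigma_{N}\phi)(f-b))\leq2\left\Vert\phi\right\Vert_{\infty}\varepsilon$ by the multiplication bound. For the bounded remainder $(\phi-\sigma_{N}\phi)b$, set $C=2\left\Vert\phi\right\Vert_{\infty}\left\Vert b\right\Vert_{\infty}$ and let $E_{N}=\{\,|\phi-\sigma_{N}(\phi)|\,|b|>\varepsilon\,\}$; since $\sigma_{N}(\phi)\to\phi$ a.e., $\mu(E_{N})\to0$, so splitting
\[
\alpha\bigl((\phi-\sigma_{N}\phi)b\bigr)\leq\alpha\bigl((\phi-\sigma_{N}\phi)b\,\chi_{E_{N}}\bigr)+\alpha\bigl((\phi-\sigma_{N}\phi)b\,\chi_{E_{N}^{c}}\bigr)\leq C\,\alpha(\chi_{E_{N}})+\varepsilon
\]
and using the continuity of $\alpha$ gives $\limsup_{N}\alpha((\phi-\sigma_{N}\phi)b)\leq\varepsilon$. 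Combining the two estimates yields $\limsup_{N}\alpha(\sigma_{N}(\phi)f-\phi f)\leq(2\left\Vert\phi\right\Vert_{\infty}+1)\varepsilon$, and letting $\varepsilon\to0$ shows $\sigma_{N}(\phi)f\to\phi f$ in $\alpha$. Since each $\sigma_{N}(\phi)f\in M$ and $M$ is $\alpha$-closed, $\phi f\in M$, proving $H^{\infty}M\subseteq M$. The delicate points to verify carefully are the a.e.\ convergence of the Fej\'er means (Fej\'er--Lebesgue) together with their uniform $L^{\infty}$ bound, and the measure-theoretic splitting that converts a.e.\ convergence into $\alpha$-convergence purely through the continuity of $\alpha$.
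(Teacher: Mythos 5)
Your proof is correct, but it takes a genuinely different route from the paper's. The paper argues by duality: it takes $\varphi\in\left(L^{\alpha}\right)^{\#}$ annihilating $M$, uses Lemma \ref{dual} to represent $\varphi(f)=\int_{\mathbb{T}}fwh\,d\mu$ with $wL^{\alpha}\subseteq L^{1}(\lambda)$, observes that $z$-invariance of $M$ forces $fwh\in H_{0}^{1}(\mu)$ for each $f\in M$, so that $\int_{\mathbb{T}}kfwh\,d\mu=0$ for every $k\in H^{\infty}$, and concludes $H^{\infty}M\subseteq M$ by Hahn--Banach; note this representation $\varphi(f)=\int fwh\,d\mu$ leans on the standing assumptions (the measure $\lambda$ with $\alpha\geq c\Vert\cdot\Vert_{1,\lambda}$ and the outer function $h$ with $\bar{\eta}h=g$). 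You instead argue directly: $z$-invariance gives $pM\subseteq M$ for analytic polynomials, and the Fej\'er means $\sigma_{N}(\phi)$ supply uniformly bounded analytic polynomials converging to $\phi$ in measure, which your two-step splitting (first truncating $f$ to a bounded $b$ with $\alpha(f-b)<\varepsilon$, then cutting along $E_{N}$) converts into the $\alpha$-convergence $\sigma_{N}(\phi)f\to\phi f$ using only the continuity of $\alpha$ and the bound $\alpha(kf)\leq\Vert k\Vert_{\infty}\alpha(f)$. That bound deserves one explicit word of justification --- it rests on the standard fact that absolute (gauge) norms are monotone, which does not follow instantly from $\alpha(|f|)=\alpha(f)$ alone, though the paper itself uses the same bound without comment in Lemma \ref{coincide} and Theorem \ref{density}, so it is fair game. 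Your route buys both economy and generality: it needs neither the dominance hypothesis, nor $\lambda$, nor the outer function $h$, so the lemma holds for an arbitrary continuous normalized gauge norm; moreover, invoking Fej\'er--Lebesgue a.e.\ convergence is more than you need, since $L^{1}(\mu)$-convergence of the Fej\'er means already gives convergence in measure, which is all your splitting uses. What the paper's duality route buys in exchange is uniformity: the same Lemma \ref{dual} machinery drives Theorem \ref{N8}, so the paper reuses one engine where you import Fej\'er theory.
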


\begin{proof}
Suppose $\varphi\in\left(  L^{\alpha}\right)  ^{\#}$ and $\varphi|_{M}=0.$ It
follows from lemma \ref{dual} that there is a $w\in L^{1}\left(
\lambda\right)  $ such that $wL^{\alpha}\subset L^{1}\left(  \lambda\right)  $
such that, for every $f\in L^{\alpha}$%
\[
\varphi\left(  f\right)  =\int_{\mathbb{T}}fw\bar{\eta}d\lambda=\int%
_{T}fwhd\mu\text{.}%
\]
Suppose $f\in M$. Then, for every integer $n\geq0$, we have $z^{n}f\in M$, so%
\[
0=\int_{\mathbb{T}}z^{n}fwhd\mu\text{.}%
\]
Since $fwh\in hL^{1}\left(  \lambda\right)  =L^{1}\left(  \mu\right)  $, it
follows that $fwh\in H_{0}^{1}\left(  \mu\right)  $. Thus if $k\in H^{\infty}%
$, we have%
\[
0=\int_{\mathbb{T}}kfwhd\mu=\varphi\left(  kf\right)  \text{.}%
\]
Hence every $\varphi\in\left(  L^{\alpha}\right)  ^{\#}$ that annihilates $M$
must annihilate $H^{\infty}M$. It follows from the Hahn-Banach theorem that
$H^{\infty}M\subset M\text{.}$
\end{proof}

We let $\mathbb{B} =\{f \in L^{\infty} :\Vert f\Vert_{\infty} \leq1\}$ denote
the closed unit ball in $L^{\infty} (\lambda)$.

\begin{lemma}
\label{coincide}Let $\alpha$ be a continuous norm on $L^{\infty}(\lambda)$,
then \newline(1) The $\alpha$-topology, the $\Vert\cdot\Vert_{2,\lambda}%
$-topology, and the topology of convergence in $\lambda$-measure coincide on
$\mathbb{B}$, \newline(2) $\mathbb{B}=\{f\in L^{\infty}(\lambda):\Vert
f\Vert_{\infty}\leq1\}$ is $\alpha$-closed.
\end{lemma}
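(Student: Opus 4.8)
The plan is to derive both parts from two one-directional facts on the ball $\mathbb{B}$: that $\alpha$-convergence forces convergence in $\lambda$-measure, and conversely that convergence in measure of a \emph{uniformly bounded} sequence forces $\alpha$-convergence. Each of the three topologies restricted to $\mathbb{B}$ is metrizable --- by the norms $\alpha$ and $\Vert\cdot\Vert_{2,\lambda}$, and by the Fr\'echet metric $d(f,g)=\int_{\mathbb{T}}\frac{|f-g|}{1+|f-g|}\,d\lambda$ for convergence in measure --- so to prove (1) it suffices to show that a sequence in $\mathbb{B}$ with limit in $\mathbb{B}$ converges in one of the topologies if and only if it converges in the others.

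For the easy implications I would argue as follows. If $f_n,f\in\mathbb{B}$ and $\alpha(f_n-f)\to 0$, then the standing domination $\alpha\geq c\Vert\cdot\Vert_{1,\lambda}$ gives $\Vert f_n-f\Vert_{1,\lambda}\le c^{-1}\alpha(f_n-f)\to 0$, and Chebyshev's inequality yields $f_n\to f$ in $\lambda$-measure; the same inequality shows $\Vert\cdot\Vert_{2,\lambda}$-convergence implies convergence in measure. For the reverse on the bounded set, set $g_n=f_n-f$ (so $\Vert g_n\Vert_\infty\le 2$), fix $\varepsilon>0$, and let $E_n=\{|g_n|>\varepsilon\}$; convergence in measure gives $\lambda(E_n)\to 0$, and the split $\int_{\mathbb{T}}|g_n|^2\,d\lambda\le 4\lambda(E_n)+\varepsilon^2$ forces $\Vert g_n\Vert_{2,\lambda}\to 0$. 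Hence $\Vert\cdot\Vert_{2,\lambda}$-convergence and convergence in $\lambda$-measure coincide on $\mathbb{B}$.

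The heart of the argument, and the step I expect to be the main obstacle, is that convergence in measure implies $\alpha$-convergence on $\mathbb{B}$; this is the only place the continuity of $\alpha$ is genuinely used, and it is uniform boundedness that makes it work. With $g_n,\varepsilon,E_n$ as above, I would split $g_n=g_n\chi_{E_n}+g_n\chi_{E_n^{c}}$. Since $|g_n\chi_{E_n^{c}}|\le\varepsilon\cdot 1$, the gauge property together with monotonicity of $\alpha$ and $\alpha(1)=1$ gives $\alpha(g_n\chi_{E_n^{c}})=\alpha(|g_n\chi_{E_n^{c}}|)\le\alpha(\varepsilon\cdot 1)=\varepsilon$; since $|g_n\chi_{E_n}|\le 2\chi_{E_n}$, the same reasoning gives $\alpha(g_n\chi_{E_n})\le 2\alpha(\chi_{E_n})$. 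Because $\lambda(E_n)\to 0$ and $\alpha$ is $\lambda$-continuous (equivalently $\mu$-continuous), we have $\alpha(\chi_{E_n})\to 0$, so $\limsup_n\alpha(f_n-f)\le\varepsilon$; letting $\varepsilon\to 0$ gives $\alpha(f_n-f)\to 0$. Together with the easy implication this shows $\alpha$-convergence and convergence in measure coincide, and with the preceding paragraph all three topologies agree on $\mathbb{B}$, proving (1).

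For (2) I would argue directly that $\mathbb{B}$ is $\alpha$-closed in $L^{\alpha}$. If $f_n\in\mathbb{B}$ and $\alpha(f_n-f)\to 0$ for some $f\in L^{\alpha}$, then as above $\Vert f_n-f\Vert_{1,\lambda}\to 0$, so a subsequence $f_{n_k}$ converges to $f$ $\lambda$-a.e.; since $|f_{n_k}|\le 1$ a.e. for every $k$, the pointwise limit satisfies $|f|\le 1$ a.e., i.e. $\Vert f\Vert_\infty\le 1$ and $f\in\mathbb{B}$. The only properties of $\alpha$ used throughout are the gauge/monotonicity property, the normalization $\alpha(1)=1$, $\lambda$-continuity, and the domination $\alpha\ge c\Vert\cdot\Vert_{1,\lambda}$, all of which are in force under the standing assumptions.
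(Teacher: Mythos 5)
Your proof is correct and follows essentially the same route as the paper: the same splitting of $f_n-f$ over $E_n=\{|f_n-f|>\varepsilon\}$ and its complement, with continuity of $\alpha$ giving $\alpha(\chi_{E_n})\to 0$, and the same $\lambda$-a.e.\ subsequence argument for the $\alpha$-closedness of $\mathbb{B}$. The only cosmetic difference is that the paper handles $\Vert\cdot\Vert_{2,\lambda}$ by observing the same argument applies with $\alpha=\Vert\cdot\Vert_{2,\lambda}$, whereas you verify it by the direct estimate $\int_{\mathbb{T}}|f_n-f|^2\,d\lambda\le 4\lambda(E_n)+\varepsilon^2$.
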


\begin{proof}
For (1), since $\alpha$ is $c \Vert\cdot\Vert_{1 ,\lambda}$-dominating,
$\alpha$-convergence implies $\Vert\cdot\Vert_{1 ,\lambda}$-convergence, and
$\Vert\cdot\Vert_{1 ,\lambda}$-convergence implies convergence in measure.
Suppose $\{f_{n}\}$ is a sequence in $\mathbb{B}$, $f_{n} \rightarrow f$ in
measure and $\varepsilon>0.$ If $E_{n} =\{z \in\mathbb{T} :\vert f (z) -f_{n}
(z)\vert\geq\frac{\varepsilon}{2}\}$, then $\lim_{n \rightarrow\infty}(E_{n})
=0$. Since $\alpha$ is continuous, we have $\lim_{n \rightarrow\infty}%
\alpha(\chi_{E_{n}}) =0$, which implies that
\begin{align*}
\alpha(f_{n} -f)  &  =\alpha((f -f_{n}) \chi_{E_{n}} +(f -f_{n})
\chi_{\mathbb{T}\backslash E_{n}})\\
&  \leq\alpha((f -f_{n}) \chi_{E_{n}}) +\alpha((f -f_{n}) \chi_{\mathbb{T}%
\backslash E_{n}})\\
&  <\alpha((\vert f -f_{n}\vert) \chi_{E_{n}}) +\frac{\varepsilon}{2}
\leq\Vert f -f_{n}\Vert_{\infty} \alpha(\chi_{E_{n}}) +\frac{\varepsilon}{2}\\
&  \leq2 \alpha(\chi_{E_{n}}) +\frac{\varepsilon}{2}\text{.}%
\end{align*}
Hence $\alpha(f_{n} -f) \rightarrow0$ as $n \rightarrow\infty$. Therefore
$\alpha$-convergence is equivalent to convergence in measure on $\mathbb{B}$.
Since $\alpha$ was arbitrary, letting $\alpha=\Vert\cdot\Vert_{2 ,\lambda}$,
we see that $\Vert\cdot\Vert_{2 ,\lambda}$-convergence is also equivalent to
convergence in measure. Therefore, the $\alpha$-topology and the $\Vert
\cdot\Vert_{2 ,\lambda}$-topology coincide on $\mathbb{B}$.

For (2), suppose $\{f_{n}\}$ is a sequence in $\mathbb{B}$, $f\; \in
L^{\alpha}$ and $\alpha(f_{n} -f) \rightarrow0$. Since $\Vert f\Vert_{1
,\lambda} \leq\frac{1}{c} \alpha(f)$. it follows that $\Vert f_{n} -f\Vert_{1
,\lambda} \rightarrow0$, which implies that $f_{n} \rightarrow f$ in $\lambda
$-measure. Then there is a subsequence $f_{n_{k}}$such that $f_{n_{k}}
\rightarrow f$ a.e. $\left(  \lambda\right)  $. Hence $f \in\mathbb{B}%
\text{.}$
\end{proof}

The following theorem and its corollary relate the closed invariant subspaces
of $L^{\alpha}$ to the weak*-closed invariant subspaces of $L^{\infty}$.

\begin{theorem}
\label{density}Let $W$ be an $\alpha$-closed linear subspace of $L^{\alpha}$
and $M$ be a weak*-closed linear subspace of $L^{\infty}(\lambda)$ such that
$zM\subseteq M$ and $zW\subseteq W$. Then \newline(1) $M=\overline{M}^{\alpha
}\cap L^{\infty}(\lambda)$, \newline(2) $W\cap L^{\infty}(\lambda)$ is
weak*-closed in $L^{\infty}(\lambda)$, \newline(3) $W=\overline{W\cap
L^{\infty}(\lambda)}^{\alpha}\text{.}$
\end{theorem}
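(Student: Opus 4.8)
The plan is to treat the three assertions with a single circle of ideas. The key device is Theorem~\ref{factorization}: given an element of an invariant subspace, I multiply by a bounded outer function to push it into $L^{\infty}$ \emph{without leaving the subspace} (using $H^{\infty}$-invariance), and then recover the original element by multiplying back by $H^{\infty}$-approximants of the reciprocal outer function. The recurring obstacle is that the $\alpha$-topology and the weak*-topology on $L^{\infty}(\lambda)$, viewed as the dual of $L^{1}(\lambda)$, are badly mismatched: $\alpha$-convergence controls only the $\alpha$-continuous functionals, which by Lemma~\ref{dual} form a proper subspace of $L^{1}(\lambda)$, whereas weak*-closedness of $M$ must be tested against all of $L^{1}(\lambda)$; conversely, weak*-convergence carries no $\alpha$-information and the natural approximants need not be bounded. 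Two preliminary facts are used throughout: if $W$ is $\alpha$-closed with $zW\subseteq W$ then $H^{\infty}W\subseteq W$ (Lemma~\ref{hift}); and if $M$ is weak*-closed with $zM\subseteq M$ then $H^{\infty}M\subseteq M$, the latter because any $\phi\in H^{\infty}$ is a bounded weak*-limit of polynomials and multiplication by a fixed $L^{\infty}$ function is weak*-continuous.

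I would establish (3) first, as it displays the device most transparently. The inclusion $\overline{W\cap L^{\infty}}^{\alpha}\subseteq W$ is immediate from $\alpha$-closedness. For the reverse, given $f\in W$ put $k=(1+|f|)^{-1}\in L^{\infty}$; since $k^{-1}=1+|f|\in L^{\alpha}$, Theorem~\ref{factorization} provides a unimodular $u$ and an outer $s\in H^{\infty}$ with $k=us$ and $s^{-1}\in H^{\alpha}$. Then $|sf|=|f|/(1+|f|)<1$, so $sf\in L^{\infty}$, while $sf\in W$ because $s\in H^{\infty}$ and $H^{\infty}W\subseteq W$; thus $sf\in W\cap L^{\infty}$. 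Choosing $p_{n}\in H^{\infty}$ with $\alpha(p_{n}-s^{-1})\to 0$ (possible since $s^{-1}\in H^{\alpha}=\overline{H^{\infty}}^{\alpha}$), the functions $p_{n}(sf)$ lie in $W\cap L^{\infty}$, and using the gauge estimate $\alpha(g\psi)\le\|g\|_{\infty}\alpha(\psi)$ already exploited in Lemma~\ref{coincide}, $\alpha(p_{n}sf-f)=\alpha((p_{n}-s^{-1})(sf))\le\|sf\|_{\infty}\,\alpha(p_{n}-s^{-1})\to 0$, so $f\in\overline{W\cap L^{\infty}}^{\alpha}$.

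For (2) I would invoke the Krein--Smulian theorem: as $W\cap L^{\infty}$ is convex, it is weak*-closed once $(W\cap L^{\infty})\cap R\mathbb{B}=W\cap R\mathbb{B}$ is weak*-closed for every $R>0$. Take $f_{n}\in W\cap R\mathbb{B}$ with $f_{n}\to f$ weak*; by Banach--Alaoglu $f\in R\mathbb{B}$, and since $L^{2}(\lambda)\subseteq L^{1}(\lambda)$ the convergence is also weak in $L^{2}(\lambda)$. Mazur's theorem then yields convex combinations $h_{j}$ of the $f_{n}$ with $\|h_{j}-f\|_{2,\lambda}\to 0$, and $h_{j}\in W\cap R\mathbb{B}$. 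By Lemma~\ref{coincide}, applied on $R\mathbb{B}$, $\|\cdot\|_{2,\lambda}$-convergence forces $\alpha(h_{j}-f)\to 0$, so $\alpha$-closedness of $W$ gives $f\in W$. Hence $W\cap R\mathbb{B}$ is weak*-closed, proving (2).

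Part (1) is where the obstacle bites, since $M$ is only weak*-closed. The inclusion $M\subseteq\overline{M}^{\alpha}\cap L^{\infty}$ is trivial. For the reverse I would pass to pre-annihilators: with $M_{\perp}=\{w\in L^{1}(\lambda):\int mw\,d\lambda=0\ \forall m\in M\}$, weak*-closedness gives $M=(M_{\perp})^{\perp}$, and every bounded $w$ defines an $\alpha$-continuous functional because $\alpha\ge c\|\cdot\|_{1,\lambda}$. Thus any $f\in\overline{M}^{\alpha}\cap L^{\infty}$ already satisfies $\int fw\,d\lambda=0$ for all $w\in M_{\perp}\cap L^{\infty}$, so it suffices to show $M_{\perp}\cap L^{\infty}$ is $\|\cdot\|_{1,\lambda}$-dense in $M_{\perp}$. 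Here $H^{\infty}M\subseteq M$ forces $H^{\infty}M_{\perp}\subseteq M_{\perp}$, since for $w\in M_{\perp}$ and $\phi\in H^{\infty}$ one has $\int m(\phi w)\,d\lambda=\int(m\phi)w\,d\lambda=0$. Given $w\in M_{\perp}$, the standing hypothesis $\int|\log g|\,d\mu<\infty$ together with the elementary bound $1+|w|\le(1+|w|g)(1+g^{-1})$ yields $\log(1+|w|)\in L^{1}(\mu)$, so there is an outer $s\in H^{\infty}$ with $|s|=(1+|w|)^{-1}$; then $sw\in M_{\perp}\cap L^{\infty}$. Moreover $s^{-1}$ is outer with $|hs^{-1}|=g(1+|w|)\in L^{1}(\mu)$, whence $hs^{-1}\in H^{1}(\mu)$ and therefore $s^{-1}\in H^{1}(\lambda)$ by Theorem~\ref{hlmd}. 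Approximating $s^{-1}$ by $p_{n}\in H^{\infty}$ in $\|\cdot\|_{1,\lambda}$ and using $|sw|\le 1$, the functions $p_{n}(sw)\in M_{\perp}\cap L^{\infty}$ satisfy $\|p_{n}(sw)-w\|_{1,\lambda}=\int|p_{n}-s^{-1}|\,|sw|\,d\lambda\le\|p_{n}-s^{-1}\|_{1,\lambda}\to 0$, giving the required density and completing (1). I expect the delicate point to be exactly this density step, namely producing the outer function $s$ and verifying $s^{-1}\in H^{1}(\lambda)$ so that bounded elements of $M_{\perp}$ approximate arbitrary ones.
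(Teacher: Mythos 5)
Your proposal is correct, and parts (2) and (3) coincide with the paper's own arguments: for (2), Krein--Smulian reduces to the ball, where Lemma \ref{coincide} identifies the $\alpha$- and $\Vert\cdot\Vert_{2,\lambda}$-topologies and convexity upgrades weak $L^{2}$-convergence to norm convergence (the paper phrases this as weak closedness of the convex $\alpha$-closed set $W\cap\mathbb{B}$ rather than via Mazur, which is the same Hahn--Banach fact); for (3), the paper uses exactly your $k=(1+|f|)^{-1}$ trick with Theorem \ref{factorization} and Lemma \ref{hift}. For (1) you use the same key device as the paper --- multiply an annihilating $F\in L^{1}(\lambda)$ by an outer $s\in H^{\infty}$ with $|s|=(1+|F|)^{-1}$ so that $sF$ is bounded, exploit $H^{\infty}$-invariance, and approximate $s^{-1}\in H^{1}(\lambda)$ by $H^{\infty}$ functions --- but you package it differently: the paper argues by contradiction with a single Hahn--Banach separating functional $F$ and deduces $0\neq\int Fw\,d\lambda=\lim_{n}\int s_{n}sFw\,d\lambda=0$, whereas you prove once and for all that $M_{\perp}\cap L^{\infty}$ is $\Vert\cdot\Vert_{1,\lambda}$-dense in $M_{\perp}$ and conclude by the bipolar theorem $M=(M_{\perp})^{\perp}$. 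Your packaging buys two genuine improvements in rigor. First, the paper obtains $s$ by citing Theorem \ref{factorization} for $k^{-1}=|F|+1$, which lies only in $L^{1}(\lambda)$ and not in $L^{\alpha}$ as that theorem's hypothesis literally requires (the citation is tacitly an application with $\Vert\cdot\Vert_{1,\lambda}$ playing the role of $\alpha$); you instead construct $s$ directly, and your verification that $\log(1+|w|)\in L^{1}(\mu)$ via $1+|w|\leq(1+|w|g)(1+g^{-1})$, together with $hs^{-1}$ outer in $H^{1}(\mu)$ (Lemma \ref{outer}) and Theorem \ref{hlmd} giving $s^{-1}\in H^{1}(\lambda)$, makes the needed instance self-contained and displays exactly where the standing hypothesis $\log g\in L^{1}(\mu)$ enters. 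Second, the paper justifies $s_{n}sr\in M$ for $r\in M$ by citing Lemma \ref{hift}, which is stated for $\alpha$-closed subspaces of $L^{\alpha}$, not weak*-closed subspaces of $L^{\infty}$; your direct proof that $H^{\infty}M\subseteq M$ (Ces\`{a}ro means of $\phi\in H^{\infty}$ converge boundedly a.e., hence weak* against $L^{1}(\lambda)$, and multiplication by a fixed $L^{\infty}$ function is weak*-continuous) supplies precisely the statement the paper needs. Two cosmetic caveats: your sequential argument in (2) implicitly uses weak*-metrizability of bounded sets, which holds here since $L^{1}(\lambda)$ is separable (the paper works with nets), and your aside that the $\alpha$-continuous functionals form a \emph{proper} subspace of $L^{1}(\lambda)$ need not be true in general, though nothing in your proof depends on it.
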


\begin{proof}
For (1), it is clear that $M\subset\overline{M}^{\alpha}\cap L^{\infty
}(\lambda)$. Assume, via contradiction, that $w\in\overline{M}^{\alpha}\cap
L^{\infty}(\lambda)$ and $w\notin M$. Since $M$ is weak*-closed, there is an
$F\in L^{1}(\lambda)$ such that $\int_{\mathbb{T}}Fwd\lambda\neq0\text{,}$ but
$\int_{\mathbb{T}}Frd\lambda=0$ for every $r\in M$. Since $k=\frac{1}%
{|F|+1}\in L^{\infty}(\lambda)$, $k^{-1}\in L^{1}(\lambda)\text{,}$ it follows
from theorem \ref{factorization}, that there is a $s\in H^{\infty}%
(\lambda),s^{-1}\in H^{1}(\lambda)$ and a unimodular function $u$ such that
$k=us$. Choose a sequence $\{s_{n}\}$ in $H^{\infty}(\lambda)$ such that
$\Vert s_{n}-s^{-1}\Vert_{1,\lambda}\rightarrow0$. Since $sF=\overline
{u}kF=\overline{u}\frac{F}{|F|+1}\in L^{\infty}(\lambda)$, we can conclude
that $\Vert s_{n}sF-F\Vert_{1,\lambda}=\Vert s_{n}sF-s^{-1}sF\Vert_{1,\lambda
}\leq\Vert s_{n}-s^{-1}\Vert_{1,\lambda}\Vert sF\Vert_{\infty}\rightarrow0.$
For each $n\in\mathbb{N}$. For every $r\in M$, from lemma \ref{hift}, we know
that $s_{n}sr\in H^{\infty}(\lambda)M\subset M$. Hence%
\[
\int_{\mathbb{T}}rs_{n}sFd\lambda=\int_{\mathbb{T}}s_{n}srFd\lambda=0,\forall
r\in M\text{.}%
\]
\newline Suppose $r\in\overline{M}^{\alpha}\text{.}$ Then there is a sequence
$r_{m}$ in $M$ such that $\alpha(r_{m}-r)\rightarrow0$ as $m\rightarrow
\infty\text{.}$ For each $n\in\mathbb{N}\mathbb{N}$, it follows from
$s_{n}sF\in H^{\infty}(\lambda)L^{\infty}(\lambda)$ that \newline%
\begin{align*}
|\int_{\mathbb{T}}rs_{n}sFd\lambda-\int_{\mathbb{T}}r_{m}s_{n}sFd\lambda|  &
\leq\int_{\mathbb{T}}|(r-r_{m})s_{n}sF|d\lambda\\
&  \leq\Vert s_{n}sF\Vert_{\infty}\int_{\mathbb{T}}|r-r_{m}|d\lambda=\Vert
s_{n}sF\Vert_{\infty}\Vert r-r_{m}\Vert_{1,\lambda}\\
&  \leq\Vert s_{n}sF\Vert_{\infty}\alpha(r-r_{m})\rightarrow0.\\
\int_{\mathbb{T}}rs_{n}sFd\lambda &  =\lim_{m\rightarrow0}\int_{\mathbb{T}%
}r_{m}s_{n}sFd\lambda=0,\forall r\in\overline{M}^{\alpha}\text{.}%
\end{align*}
In particular, $w\in\overline{M}^{\alpha}\cap L^{\infty}(\lambda)$ implies
that
\[
\int_{\mathbb{T}}s_{n}sFwd\lambda=\int_{\mathbb{T}}ws_{n}sFd\lambda=0.
\]
Hence,
\begin{align*}
0  &  \neq|\int_{\mathbb{T}}Fwd\lambda|\leq\lim_{n\rightarrow\infty}%
|\int_{\mathbb{T}}Fw-s_{n}sFwd\lambda|+\lim_{n\rightarrow\infty}%
|\int_{\mathbb{T}}s_{n}sFwd\lambda|\\
&  \leq\lim_{n\rightarrow\infty}\Vert F-s_{n}sF\Vert_{1,\lambda}\Vert
w\Vert_{\infty}+0=0.
\end{align*}
\newline We get a contradiction. Hence $M=\overline{M}^{\alpha}\cap L^{\infty
}(\lambda)\text{.}$

For (2), to prove $W\cap L^{\infty}(\lambda)$ is weak*-closed in $L^{\infty
}(\lambda)$, using the Krein-Smulian theorem, we only need to show that $W\cap
L^{\infty}(\lambda)\cap\mathbb{B}$, i.e., $W\cap\mathbb{B}$, is weak*-closed.
By lemma \ref{coincide}, $W\cap\mathbb{B}$ is $\alpha$-closed. Since $\alpha$
is $c\Vert\cdot\Vert_{1,\lambda}$-dominating, it follows from the lemma
\ref{coincide}, $W\cap\mathbb{B}$ is $\Vert\cdot\Vert_{2,\lambda}$ closed. The
fact that $W\cap\mathbb{B}$ is convex implies $W\cap\mathbb{B}$ is closed in
the weak topology on $L^{2}(\lambda)$. If $\{f_{\lambda}\}$ is a net in
$W\cap\mathbb{B}$ and $f_{\lambda}\rightarrow f$ weak* in $L^{\infty}%
(\lambda)$, then, for every $w\in L^{1}(\lambda),\int_{\mathbb{T}}(f_{\lambda
}-f)wd\lambda\rightarrow0$. Since $L^{2}(\lambda)\subset L^{1}(\lambda)$,
$f_{\lambda}\rightarrow f$ weakly in $L^{2}(\lambda)$, so $f\in W\cap
\mathbb{B}$. Hence $W\cap\mathbb{B}$ is weak*-closed in $L^{\infty}(\lambda)$.
\newline For (3), since $W$ is $\alpha$-closed in $L^{\alpha}$, it is clear
that $W\supset\overline{W\cap L^{\infty}(\lambda)}^{\alpha}$, suppose $f\in W$
and let $k=\frac{1}{|f|+1}$. Then $k\in L^{\infty}(\lambda)$, $k^{-1}\in$
$L^{\alpha}\text{.}$ It follows from Theorem \ref{factorization} that there is
a $s\in H^{\infty}(\lambda),s^{-1}\in$ $H^{\alpha}$ and an unimodular function
$u$ such that $k=us$, so $sf=\overline{u}ks=\overline{u}\frac{f}{|f|+1}\in
L^{\infty}(\lambda)$. There is a sequence $\{s_{n}\}$ in $H^{\infty}(\lambda)$
such that $\alpha(s_{n}-s^{-1})\rightarrow0$. For each $n\in\mathbb{N}$, it
follows from the lemma \ref{hift} that $s_{n}sf\in H^{\infty}(\lambda
)H^{\infty}(\lambda)W\subset W$ and $s_{n}sf\in H^{\infty}(\lambda)L^{\infty
}(\lambda)\subset L^{\infty}(\lambda)$, which implies that $s_{n}sf$ is a
sequence in $W\cap L^{\infty}(\lambda)$, $\alpha(s_{n}sf-f)\leq\alpha
(s_{n}-s^{-1})\Vert sf\Vert_{\infty}\rightarrow0$. Thus $f\in\overline{W\cap
L^{\infty}(\lambda)}^{\alpha}$. Therefore $W=\overline{W\cap L^{\infty
}(\lambda)}^{\alpha}$.
\end{proof}

\begin{corollary}
\label{w*}A weak*-closed linear subspace $M$ of $L^{\infty} (\lambda)$
satisfies $z M \subset M$ if and only if $M =\varphi H^{\infty} (\lambda)$ for
some unimodular function $\varphi$ or $M =\chi_{E} L^{\infty} (\lambda)$, for
some Borel subset $E$ of $\mathbb{T}$.
\end{corollary}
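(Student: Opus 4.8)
The plan is to classify $M$ by passing to its $\Vert\cdot\Vert_{1,\lambda}$-closure, applying the $L^{1}(\lambda)$-case of Beurling's theorem that is already available through Corollary \ref{unimodu}, and then descending back to $L^{\infty}(\lambda)$ by the mechanism of Theorem \ref{density}. The reverse implication is routine: if $\varphi$ is unimodular then $z\varphi H^{\infty}(\lambda)=\varphi(zH^{\infty}(\lambda))\subseteq\varphi H^{\infty}(\lambda)$, and $\varphi H^{\infty}(\lambda)$ is weak*-closed because multiplication by $\varphi$ is a weak*-homeomorphism of $L^{\infty}(\lambda)$ (its predual adjoint is $w\mapsto\varphi w$ on $L^{1}(\lambda)$, with inverse $w\mapsto\bar\varphi w$); likewise $\chi_{E}L^{\infty}(\lambda)$ is weak*-closed and $z$-invariant. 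So I concentrate on the forward implication.

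Given a weak*-closed $M$ with $zM\subseteq M$, I would set $N=\overline{M}^{\,\Vert\cdot\Vert_{1,\lambda}}$, a $\Vert\cdot\Vert_{1,\lambda}$-closed, $z$-invariant subspace of $L^{1}(\lambda)$. Applying Corollary \ref{unimodu} with $p=1$ to $N$ gives $gN=\chi_{E}L^{1}(\mu)$ or $gN=\varphi H^{1}(\mu)$ for some unimodular $\varphi$. Dividing by $g$ and using that $w\mapsto gw$ is an isometry of $L^{1}(\lambda)$ onto $L^{1}(\mu)$, the first alternative yields $N=\chi_{E}L^{1}(\lambda)$. In the second I would invoke assumption (3), $g=\bar\eta h$ with $h$ outer, together with Theorem \ref{hlmd} (which gives $h^{-1}H^{1}(\mu)=H^{1}(\lambda)$), to rewrite $N=\varphi g^{-1}H^{1}(\mu)=\varphi\eta h^{-1}H^{1}(\mu)=\psi H^{1}(\lambda)$, where $\psi=\varphi\eta$ is unimodular.

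The crucial step is the descent $M=N\cap L^{\infty}(\lambda)$. Since $\alpha\geq c\Vert\cdot\Vert_{1,\lambda}$ forces $\overline{M}^{\alpha}\subseteq N$, Theorem \ref{density}(1) already gives $M=\overline{M}^{\alpha}\cap L^{\infty}(\lambda)\subseteq N\cap L^{\infty}(\lambda)$; the reverse inclusion follows by repeating the Hahn--Banach/factorization argument of Theorem \ref{density}(1), with $\Vert\cdot\Vert_{1,\lambda}$-approximation in place of $\alpha$-approximation. Indeed the only feature of $\alpha$ used there is that $\alpha$-convergence implies $\Vert\cdot\Vert_{1,\lambda}$-convergence, a step that is superfluous once one approximates elements of $N$ from $M$ in $\Vert\cdot\Vert_{1,\lambda}$ directly; one still factors $k=\tfrac{1}{|F|+1}=us$ via Theorem \ref{factorization} (with $k^{-1}\in L^{1}(\lambda)$) and absorbs $H^{\infty}$-multipliers into $M$ via Lemma \ref{hift}. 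Intersecting the two forms of $N$ with $L^{\infty}(\lambda)$ then finishes things: $\chi_{E}L^{1}(\lambda)\cap L^{\infty}(\lambda)=\chi_{E}L^{\infty}(\lambda)$, and $\psi H^{1}(\lambda)\cap L^{\infty}(\lambda)=\psi\bigl(H^{1}(\lambda)\cap L^{\infty}(\lambda)\bigr)$.

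The last identity rests on $H^{1}(\lambda)\cap L^{\infty}(\lambda)=H^{\infty}(\lambda)$, and this is the step I expect to be the main obstacle. I would establish it through the outer function $h$: if $f\in H^{1}(\lambda)\cap L^{\infty}$ then $hf\in H^{1}(\mu)$ by Theorem \ref{hlmd}, so $hf=I\,O$ with $I$ inner and $O$ outer; since $h$ is outer, $S:=O/h$ is a ratio of outer functions, hence outer, with $|S|=|f|$ bounded, so $S\in H^{\infty}$, whence $f=hf/h=I S\in H^{\infty}$. The opposite containment $H^{\infty}\subseteq H^{1}(\lambda)$ holds because the Fej\'er means of an $H^{\infty}$ function are polynomials converging to it boundedly, hence in $\Vert\cdot\Vert_{1,\lambda}$ by dominated convergence. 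As a cleaner alternative that bypasses the descent altogether, one may observe that the identity map on $L^{\infty}(\mu)=L^{\infty}(\lambda)$ is the Banach-space adjoint of the isometric isomorphism $v\mapsto g^{-1}v$ of $L^{1}(\mu)$ onto $L^{1}(\lambda)$, so the two weak*-topologies coincide and the classical weak*-$L^{\infty}$ Beurling theorem on $(\mathbb{T},\mu)$ applies verbatim.
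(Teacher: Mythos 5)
Your proof is correct, and its main line is structurally the same as the paper's, with one genuinely different addition at the end. The paper passes to $\overline{M}^{\Vert\cdot\Vert_{2,\lambda}}$, applies the $\Vert\cdot\Vert_{2,\lambda}$-version of Beurling--Helson--Lowdenslager (available, under the standing hypothesis $\log g\in L^{1}(\mu)$, through Corollary \ref{unimodu} and Theorem \ref{hlmd}), and descends by $M=\overline{M}^{\Vert\cdot\Vert_{2,\lambda}}\cap L^{\infty}(\lambda)$ from Theorem \ref{density}(1); you do exactly this with $p=1$, and your observation that the descent need not be re-proved is right: $\Vert\cdot\Vert_{1,\lambda}$ is itself a continuous normalized gauge norm that is $1\cdot\Vert\cdot\Vert_{1,\lambda}$-dominating, so Theorem \ref{density}(1) applies to it verbatim and yields $M=N\cap L^{\infty}(\lambda)$ directly. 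You also add value where the paper is terse: its proof stops at $M=\varphi H^{2}(\lambda)\cap L^{\infty}(\lambda)$ without justifying that this intersection is $\varphi H^{\infty}(\lambda)$, whereas you prove the corresponding identity $H^{1}(\lambda)\cap L^{\infty}=H^{\infty}$ correctly via Theorem \ref{hlmd} and Lemma \ref{outer} (write $hf=IO$, note $S=O/h$ is outer with $\vert S\vert=\vert f\vert$ bounded, so $f=IS\in H^{\infty}$), with the Fej\'er-mean argument giving $H^{\infty}\subseteq H^{1}(\lambda)$. Your closing alternative is genuinely different from the paper and arguably cleaner: since the identity map on $L^{\infty}$ is the adjoint of the isometric isomorphism $v\mapsto g^{-1}v$ of $L^{1}(\mu)$ onto $L^{1}(\lambda)$, the two weak* topologies coincide, and the classical weak* Beurling theorem on $(\mathbb{T},\mu)$ gives the corollary at once --- notably without using $\log g\in L^{1}(\mu)$ at all, which both your main route and the paper's route need through the outer function $h$; this also explains why the pathology of the paper's last section occurs only for norm-closed subspaces of $L^{p}(\lambda)$, never for weak*-closed subspaces of $L^{\infty}$. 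One small blemish you inherit from the paper: you invoke Lemma \ref{hift} to get $H^{\infty}M\subseteq M$, but that lemma is stated for closed subspaces of $L^{\alpha}$, while $M$ is weak*-closed in $L^{\infty}$ and need not be norm-closed in $L^{\alpha}$; the absorption is nonetheless immediate for weak*-closed $M$, since for $k\in H^{\infty}$ the Fej\'er means of $k$ are polynomials converging to $k$ boundedly a.e., hence $k_{n}r\rightarrow kr$ weak* for each $r\in M$.
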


\begin{proof}
If $M =\varphi H^{\infty} (\lambda)$ for some unimodular function $\varphi$ or
$M =\chi_{E} L^{\infty} (\lambda)$, for some Borel subset $E$ of $\mathbb{T}$,
clearly, a weak*-closed linear subspace $M$ of $L^{\infty} (\lambda)$ with $z
M \subset M$. Conversely, since $z M \subset M$, and we have $z \overline
{M}^{\Vert\cdot\Vert_{2 ,\lambda}} \subset\overline{M}^{\Vert\cdot\Vert_{2
,\lambda}}$. Hence by Beurling-Helson-Lowdenslager theorem for $\Vert
\cdot\Vert_{2 ,\lambda}$, we consider either $\overline{M}^{\Vert\cdot\Vert_{2
,\lambda}} =\varphi H^{2} (\lambda)$ for some unimodular function $\varphi$,
then $M =\overline{M}^{\Vert\cdot\Vert_{2 ,\lambda}} \cap L^{\infty} (\lambda)
=\varphi H^{2} (\lambda) \cap L^{\infty} (\lambda)$; or $\overline{M}%
^{\Vert\cdot\Vert_{2 ,\lambda}} =\chi_{E} L^{2} (\lambda)$, for some Borel
subset $E$ of $\mathbb{T}$, in this case, $M =\overline{M}^{\Vert\cdot\Vert_{2
,\lambda}} \cap L^{\infty} (\lambda) =\chi_{E} L^{2} (\lambda) \cap L^{\infty}
(\lambda) =\chi_{E} L^{\infty} (\lambda)$, i.e., $M =\chi_{E} L^{\infty}
(\lambda)$.
\end{proof}

Now we obtain our main theorem, which extends the Chen-Beurling
Helson-Lowdenslager Theorem.

\begin{theorem}
Suppose $\mu$ is Haar measure on $\mathbb{T}$ and $\alpha$ is a continuous
normalized gauge norm on $L^{\infty}(\mu)$. Suppose also that $c>0$ and
$\lambda$ is a probability measure that is mutually absolutely continuous with
respect to $\mu$ such that $\alpha$ is $c\left\Vert \text{{}}\right\Vert
_{1,\lambda}$-dominating and $\log\left\vert d\lambda/d\mu\right\vert \in
L^{1}\left(  \mu\right)  \text{.}$ Then a closed linear subspace $W$ of
$L^{\alpha}(\mu)$ satisfies $zW\subset W$ if and only if either $W=\varphi
H^{\alpha}(\mu)$ for some unimodular function $\varphi$, or $W=\chi
_{E}L^{\alpha}(\mu)$, for some Borel subset $E$ of $\mathbb{T}$. If $0\neq
W\subset H^{\alpha}(\mu)$, then $W=\varphi H^{\alpha}(\mu)$ for some inner
function $\varphi$.
\end{theorem}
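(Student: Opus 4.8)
The plan is to reduce the problem to its $L^{\infty}$ analogue, Corollary \ref{w*}, and then transport the answer back to $L^{\alpha}$ by taking $\alpha$-closures, using the dictionary between $\alpha$-closed and weak*-closed $z$-invariant subspaces supplied by Theorem \ref{density}. The ``if'' direction is routine: multiplication by a unimodular $\varphi\in L^{\infty}$ is a surjective $\alpha$-isometry of $L^{\alpha}$ (since $\alpha(\varphi f)=\alpha(|\varphi f|)=\alpha(f)$, with inverse multiplication by $\bar{\varphi}$), so $\varphi H^{\alpha}$ is $\alpha$-closed, and $zH^{\alpha}\subseteq H^{\alpha}$ gives $z\varphi H^{\alpha}\subseteq\varphi H^{\alpha}$; similarly $\chi_{E}L^{\alpha}$ is an $L^{\infty}$-submodule, hence $z$-invariant, and it is $\alpha$-closed because $\alpha\geq c\Vert\cdot\Vert_{1,\lambda}$ forces $\alpha$-convergence to imply convergence in $\lambda$-measure, and a subsequence converging a.e.$(\lambda)$ keeps the support inside $E$ (this is exactly the mechanism of Lemma \ref{coincide}).

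For the ``only if'' direction, given an $\alpha$-closed $z$-invariant $W\subseteq L^{\alpha}$, I would set $M=W\cap L^{\infty}$. Then $zM\subseteq M$ trivially, and by Theorem \ref{density}(2) $M$ is weak*-closed in $L^{\infty}(\lambda)$; Corollary \ref{w*} then yields $M=\varphi H^{\infty}(\lambda)$ for a unimodular $\varphi$ or $M=\chi_{E}L^{\infty}(\lambda)$ for a Borel set $E$. By Theorem \ref{density}(3) we have $W=\overline{M}^{\alpha}$, so it remains to identify these two $\alpha$-closures. Using that multiplication by $\varphi$ is an $\alpha$-homeomorphism one gets $\overline{\varphi H^{\infty}}^{\alpha}=\varphi\,\overline{H^{\infty}}^{\alpha}=\varphi H^{\alpha}$; and $\overline{\chi_{E}L^{\infty}}^{\alpha}=\chi_{E}L^{\alpha}$ follows from the monotonicity $\alpha(\chi_{E}u)\leq\alpha(u)$ (approximate $\chi_{E}f$ by $\chi_{E}f_{n}$ for $f_{n}\in L^{\infty}$ with $\alpha(f_{n}-f)\to 0$) together with the $\alpha$-closedness of $\chi_{E}L^{\alpha}$ noted above. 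This gives $W=\varphi H^{\alpha}$ or $W=\chi_{E}L^{\alpha}$.

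For the final assertion, suppose $0\neq W\subseteq H^{\alpha}$. I would first rule out $W=\chi_{E}L^{\alpha}$: then $\chi_{E}L^{\infty}\subseteq\chi_{E}L^{\alpha}\subseteq H^{\alpha}\subseteq H^{1}(\lambda)$ by Theorem \ref{N8}, so taking $\Vert\cdot\Vert_{1,\lambda}$-closures gives $\chi_{E}L^{1}(\lambda)\subseteq H^{1}(\lambda)$, and multiplying by $h$ (Theorem \ref{hlmd}) yields $\chi_{E}L^{1}(\mu)=h\chi_{E}L^{1}(\lambda)\subseteq hH^{1}(\lambda)=H^{1}(\mu)$. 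Since $H^{1}(\mu)$ contains no nonzero doubly $z$-invariant subspace (the classical fact: $\chi_{E}$ and $\bar{z}\chi_{E}$ both in $H^{1}(\mu)$ force $\widehat{\chi_{E}}(0)=\mu(E)=0$), we get $\mu(E)=0$ and $W=0$, a contradiction; hence $W=\varphi H^{\alpha}$ with $\varphi$ unimodular. Because $1\in H^{\alpha}$ we have $\varphi\in W\subseteq H^{\alpha}\subseteq H^{1}(\lambda)$, so $h\varphi\in hH^{1}(\lambda)=H^{1}(\mu)$ with $|h\varphi|=|h|$. Taking the inner-outer factorization $h\varphi=\gamma_{1}h_{1}$ (with $\gamma_{1}$ inner, $h_{1}$ outer) and using $\log|h|\in L^{1}(\mu)$, the outer factor $h_{1}$ has the same modulus as the outer function $h$, so $h_{1}$ and $h$ differ by a unimodular constant; dividing (here $h\neq 0$ a.e.), $\varphi$ equals a unimodular constant times $\gamma_{1}$ and is therefore inner.

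The main obstacle is really already discharged by the machinery built earlier, namely Theorem \ref{density} (the density and weak*-closedness dictionary) and Corollary \ref{w*}. Given those, the only genuinely new points needing care are the two $\alpha$-closure identities $\overline{\varphi H^{\infty}}^{\alpha}=\varphi H^{\alpha}$ and $\overline{\chi_{E}L^{\infty}}^{\alpha}=\chi_{E}L^{\alpha}$, and the upgrade from ``unimodular'' to ``inner'' in the last statement, which relies on the outer-function rigidity guaranteed by $\log|d\lambda/d\mu|\in L^{1}(\mu)$ through Lemma \ref{cases}(1) and the existence of the outer function $h$.
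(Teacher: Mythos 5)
Your proof of the main dichotomy is exactly the paper's argument: set $M=W\cap L^{\infty}(\lambda)$, get weak*-closedness of $M$ from Theorem \ref{density}(2), apply Corollary \ref{w*} to get $M=\varphi H^{\infty}(\lambda)$ or $M=\chi_{E}L^{\infty}(\lambda)$, and recover $W=\overline{M}^{\alpha}$ via Theorem \ref{density}(3) together with the two closure identities $\overline{\varphi H^{\infty}}^{\alpha}=\varphi H^{\alpha}$ and $\overline{\chi_{E}L^{\infty}}^{\alpha}=\chi_{E}L^{\alpha}$, which you justify (unimodular multiplication is an $\alpha$-isometry; gauge-norm monotonicity) where the paper merely asserts them. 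It is worth noting that the paper's proof stops there and never addresses the theorem's final sentence, whereas your last paragraph supplies it correctly: ruling out $W=\chi_{E}L^{\alpha}\subseteq H^{\alpha}$ by pushing $\chi_{E}L^{1}(\mu)$ into $H^{1}(\mu)$ via Theorem \ref{hlmd} and the classical observation that $\chi_{E},\bar{z}\chi_{E}\in H^{1}(\mu)$ forces $\mu(E)=\widehat{\chi_{E}}(0)=0$, and then upgrading the unimodular $\varphi$ to an inner function through $h\varphi\in H^{1}(\mu)$, inner-outer factorization, and the rigidity of outer functions with equal boundary modulus (available precisely because $\log\left\vert d\lambda/d\mu\right\vert\in L^{1}(\mu)$ gives the outer $h$ by Lemma \ref{cases}(1)) -- so your write-up is in fact more complete than the paper's own proof.
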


\begin{proof}
Recall that $L^{\infty} (\mu) =L^{\infty} (\lambda) ,L^{\alpha} (\mu)
=L^{\alpha} (\lambda)$ and $H^{\alpha} (\mu) =H^{\alpha} (\lambda)\text{.}$The
only if part is obvious. Let $M =W \cap L^{\infty} (\lambda)$, and in the
theorem \ref{thm(1)}, we proved that there exists a measure $\lambda$ such
that $\lambda\ll\mu$ and $\mu\ll\lambda$ and there exists $c >0$, $\forall f
\in L^{\infty} (\mu) =L^{\infty} (\lambda) ,\alpha(f) \geq$ $c \Vert f\Vert_{1
,\lambda}$. i.e., $\alpha$ is a continuous $c \Vert\cdot\Vert_{1 ,\lambda}%
$-dominating normalized gauge norm on $L^{\infty} (\lambda)$. It follows from
the (2) in theorem \ref{density} that $M$ is weak* closed in $L^{\infty}
(\lambda)$. Since $z W \subset W$, it is easy to check that $z M \subset M$.
Then by the corollary \ref{w*}, we can conclude that either $M =\varphi
H^{\infty} (\lambda)$ for some unimodular function $\varphi$ or $M =\chi_{E}
L^{\infty} (\lambda)$, for some Borel subset $E$ of $\mathbb{T}$. By the (3)
in theorem \ref{density}, if $M =\varphi H^{\infty} (\lambda)\text{,}$ $W
=\overline{W \cap L^{\infty} (\lambda)}^{\alpha} =\overline{M}^{\alpha}
=\overline{\varphi H^{\infty} (\lambda)}^{\alpha} =\varphi H^{\alpha} =\varphi
H^{\alpha} (\mu)$, for some unimodular function $\varphi$. If $M =\chi_{E}
L^{\infty} (\lambda)\text{,}$ $W =\overline{W \cap L^{\infty} (\lambda
)}^{\alpha} =\overline{M}^{\alpha} =\overline{\chi_{E} L^{\infty} (\lambda
)}^{\alpha} =\chi_{E} L^{\alpha} =\chi_{E} L^{\alpha} (\mu)$, for some Borel
subset $E$ of $\mathbb{T}$. The proof is complete.
\end{proof}

\section{Which $\alpha$'s have a good $\lambda$?}

In the preceding section we proved a version of Beurling's theorem for
$L^{\alpha}$ when there is a probability measure $\lambda$ on $\mathbb{T}$
that is mutually absolutely continuous with respect to $\mu$, such that
$\alpha$ is $c \Vert\cdot\Vert_{1 ,\lambda}$-dominating and $d \lambda/d \mu$
is log-integrable with respect to $\mu$. How do we tell when such a good
$\lambda$ exists. Suppose $\rho$ is a probability measure on $\mathbb{T}$ that
is mutually absolutely continuous with respect to $\mu$ such that%
\[
\int_{\mathbb{T}}\log\left(  d \rho/d \mu\right)  d\mu= -\infty\text{.}%
\]
Here are some useful examples.

\begin{example}
Let $\alpha=\frac{1}{2} \left(  \Vert\cdot\Vert_{1 ,\mu} +\Vert\cdot\Vert_{1
,\rho}\right)  $. Then $\alpha$ is a continuous gauge norm. If we let
$\lambda_{1} =\rho$ and $\lambda_{2} =\mu$ we see that $\alpha\geq\frac{1}{2}
\lambda_{k}$ for $k =1 ,2$ and%
\[
\int_{\mathbb{T}}\left\vert \log\left(  d \lambda_{k}/d \mu\right)
\right\vert d\mu=\left\{
\begin{array}
[c]{cc}%
\infty & \; \text{if}\;k =1\\
0 & \; \text{if}\;k =2
\end{array}
\right.  \text{.}%
\]
Hence there is both a bad choice of $\lambda$ and a good choice.
\end{example}

\begin{example}
Suppose $\rho$ is as in the preceding example and let $\alpha=\Vert\cdot
\Vert_{1 ,\rho}\text{.}$ Suppose $\lambda$ is a probability measure that is
mutually absolutely continuous with respect to $\mu$ and
\[
\Vert\cdot\Vert_{1 ,\rho} =\alpha\geq c \Vert\cdot\Vert_{1 ,\lambda}\;
\text{for some constant}\;c\text{.}%
\]
It follows that $d \lambda/d \rho\leq c$ a.e., and thus
\[
\int_{\mathbb{T}}\log\left(  d \lambda/d \mu\right)  d\mu=\int_{\mathbb{T}%
}\log\left(  d \lambda/d \rho\right)  d\mu+\int_{\mathbb{T}}\log\left(  d
\rho/d \mu\right)  d\mu\leq\log\varepsilon+\left(  -\infty\right)  =
-\infty\text{.}%
\]
In this case there is no good $\lambda$.
\end{example}

\section{A special case.}

Suppose $\lambda$ is any probability measure that is mutually absolutely
continuous with respect to $\mu$ and $\alpha=\Vert\cdot\Vert_{p,\lambda}$ for
some $p$ with $1\leq p<\infty$. Assume $\lambda$ is bad, i.e., $\int%
_{\mathbb{T}}\left\vert \log\frac{d\lambda}{d\mu}\right\vert d\mu
=\infty\text{.}$ In this case, we define a bijective isometry mapping
$U:L^{p}(\lambda)\rightarrow L^{p}(\mu)$ by $Uf=g^{\frac{1}{p}}f\text{.}$ Let
$H^{p}(\lambda)$ be the $\alpha$-closure of all polynomials, then
$H^{p}(\lambda)$ is a closed subspace of $L^{p}(\lambda)$ and $zH^{p}%
(\lambda)\subseteq H^{p}(\lambda)\text{.}$ Therefore, $g^{\frac{1}{p}}%
H^{p}(\lambda)$ is a $z$-invariant closed subspace of $L^{p}(\mu)\text{.}$ By
Beurling-Helson-Lowdenslager theorem, we have
\[
g^{\frac{1}{p}}H^{p}(\lambda)=\chi_{E}L^{p}(\mu)\; \text{for some Borel
set}\;E\subseteq\mathbb{T},\; \text{or}\; \varphi H^{p}(\mu),\; \text{where}\;
\left\vert \varphi\right\vert =1.
\]
If $g^{\frac{1}{p}}H^{p}(\lambda)=\chi_{E}L^{p}(\mu)\text{,}$ then
$H^{p}(\lambda)=L^{p}(\lambda)\text{,}$ in this case, $\varphi H^{p}%
(\lambda)=\varphi L^{p}(\lambda)\text{,}$ where $\left\vert \varphi\right\vert
=1.$ If $M_{0}=\frac{1}{g^{1/p}}H^{p}(\mu),$ then $M_{0}$ is a proper
$z$-invariant closed subspace of $L^{p}(\lambda)\text{,}$ and $M_{0}\neq
\chi_{E}L^{p}(\lambda)\text{.}$ Therefore, Beurling-Helson-Lowdenslager
theorem is not true for this case. However, we have the following theorem

\begin{theorem}
Suppose $\lambda$ is any probability measure that is mutually absolutely
continuous with respect to $\mu$ and $\alpha=\Vert\cdot\Vert_{p,\lambda}$ for
some $p$ with $1\leq p<\infty$. Also assume $\int_{\mathbb{T}}\left\vert
\log\frac{d\lambda}{d\mu}\right\vert d\mu=\infty\text{.}$ If $M$ is a closed
subspace of $L^{\alpha}(\lambda)\text{,}$ then $zM\subseteq M$ if and only if
\newline(1) $M=\varphi M_{0}$ for some unimodular function $\varphi$, where
$M_{0}=\frac{1}{g^{1/p}}H^{p}(\mu)\text{,}$ or \newline(2) $M=\chi
_{E}L^{\alpha}(\lambda)$ for some Borel subset $E$ of $\mathbb{T}$.
\end{theorem}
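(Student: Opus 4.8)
The plan is to transport the problem from $L^{p}(\lambda)$ to $L^{p}(\mu)$ by the surjective isometry $U$ and then invoke the classical Beurling-Helson-Lowdenslager theorem on $L^{p}(\mu)$. First I would record that, since $\alpha=\Vert\cdot\Vert_{p,\lambda}$, we have $L^{\alpha}(\lambda)=L^{p}(\lambda)$, and that the map $Uf=g^{1/p}f$ defined just before the statement is a surjective isometry of $L^{p}(\lambda)$ onto $L^{p}(\mu)$, surjectivity using $g>0$ a.e. Exactly as in the proof of Corollary \ref{unimodu}, $U$ intertwines multiplication by $z$, i.e. $UM_{z,\lambda}=M_{z,\mu}U$. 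Consequently $M$ is a closed $z$-invariant subspace of $L^{p}(\lambda)$ if and only if $U(M)=g^{1/p}M$ is a closed $z$-invariant subspace of $L^{p}(\mu)$.

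Next I would apply the classical Beurling-Helson-Lowdenslager theorem to the closed $z$-invariant subspace $g^{1/p}M$ of $L^{p}(\mu)$. This yields two alternatives: either $g^{1/p}M=\chi_{E}L^{p}(\mu)$ for some Borel set $E\subseteq\mathbb{T}$, or $g^{1/p}M=\varphi H^{p}(\mu)$ for some unimodular $\varphi$. Applying $U^{-1}$, that is, multiplying by $g^{-1/p}$, I would rewrite each alternative. In the first case $M=g^{-1/p}\chi_{E}L^{p}(\mu)=\chi_{E}\,g^{-1/p}L^{p}(\mu)$; since $g^{-1/p}L^{p}(\mu)=U^{-1}(L^{p}(\mu))=L^{p}(\lambda)=L^{\alpha}(\lambda)$, this gives $M=\chi_{E}L^{\alpha}(\lambda)$, which is alternative (2). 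In the second case $M=g^{-1/p}\varphi H^{p}(\mu)=\varphi\,g^{-1/p}H^{p}(\mu)=\varphi M_{0}$, which is alternative (1).

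For the converse direction I would simply check that both described subspaces are closed and $z$-invariant. The set $\chi_{E}L^{\alpha}(\lambda)$ is evidently closed and $z$-invariant. For $\varphi M_{0}$, applying $U$ gives $U(\varphi M_{0})=g^{1/p}\varphi\,g^{-1/p}H^{p}(\mu)=\varphi H^{p}(\mu)$, which is closed and $z$-invariant in $L^{p}(\mu)$; pulling back through the isometry $U$ shows $\varphi M_{0}$ is closed and $z$-invariant in $L^{p}(\lambda)$.

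The computations here are all routine, so I expect no serious obstacle; the argument is essentially a change of variables through $U$. The only point requiring a little care is the bookkeeping of the identities $g^{-1/p}L^{p}(\mu)=L^{\alpha}(\lambda)$ and $M_{0}=g^{-1/p}H^{p}(\mu)$, so that the two pulled-back alternatives match exactly the forms stated. I would also note that the hypothesis $\int_{\mathbb{T}}\vert\log(d\lambda/d\mu)\vert\,d\mu=\infty$ plays no role in the transport argument itself; it is used only to guarantee, via Corollary \ref{log} and the discussion preceding the theorem, that $M_{0}$ is genuinely distinct from a space of the form $\varphi H^{\alpha}$, so that the two listed alternatives constitute the correct normal form in this bad regime.
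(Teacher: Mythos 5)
Your proposal is correct and is essentially the paper's own argument: the paper states this theorem without a separate proof, the intended justification being precisely the discussion preceding it (and the proof of Corollary \ref{unimodu}), namely transporting closed $z$-invariant subspaces through the isometry $Uf=g^{1/p}f$ and invoking the classical Beurling--Helson--Lowdenslager theorem in $L^{p}(\mu)$, exactly as you do. Your closing remark that the hypothesis $\int_{\mathbb{T}}\left\vert \log (d\lambda/d\mu)\right\vert d\mu=\infty$ is not used in the transport itself, but only distinguishes $M_{0}$ from the forms $\varphi H^{\alpha}$ and $\chi_{E}L^{\alpha}$, also matches the paper's discussion.
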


\end{document}